\def\hpq0{h^{p,q}_{\leq 0}}
\def\Hpq0{\H_{\leq 0}^{p,q}}
\def\dt{\partial^{\phi_t}}
\def\dbar{\bar\partial}
\def\ddbar{\partial\dbar}
\def\R{{\mathbb R}}
\def\C{{\mathbb C}}
\def\G{{\mathcal G}}
\def\F{{\mathcal F}}
\def\X{{\mathcal X}}
\def\dof{\dot{\phi_t}}
\def\dofnu{\dot{\phi_t^\nu}}
\def\V{{\mathcal V}}
\def\H{{\mathcal H}}
\def\E{{\mathcal E}}
\def\Re{{\rm Re\,  }}
\def\Im{{\rm Im\,  }}
\def\be{\begin{equation}}
\def\ee{\end{equation}}
\newtheorem{thm}{Theorem}[section]
\newtheorem{lma}[thm]{Lemma}
\newtheorem{prop}[thm]{Proposition}
\theoremstyle{definition}
\theoremstyle{remark}
\newtheorem{preremark}{Remark}
\newtheorem{preex}{Example}
\newenvironment{remark}{\begin{preremark}}{\qed\end{preremark}}
\numberwithin{equation}{section}
\begin{document}

\title[]
{A Brunn-Minkowski type inequality for Fano manifolds and the
  Bando-Mabuchi uniqueness theorem.}

\address{B Berndtsson :Department of Mathematics\\Chalmers University
  of Technology \\
  and Department of Mathematics\\University of G\"oteborg\\S-412 96
  G\"OTEBORG\\SWEDEN,\\} 

\email{ bob@math.chalmers.se}

\author[]{ Bo Berndtsson}

\begin{abstract}
For $\phi$ a metric on the anticanonical bundle, $-K_X$, of a Fano manifold $X$
we consider the volume of $X$
$$
\int_X e^{-\phi}.
$$
We prove that the logarithm of the volume is concave along bounded geodesics
in the space of positively curved metrics on $-K_X$ and that the
concavity is strict unless the geodesic comes from the flow of a
holomorphic vector field on $X$. As a consequence we get a simplified
proof of the Bando-Mabuchi uniqueness theorem for K\"ahler - Einstein
metrics. We also prove  a generalization of this theorem to 'twisted'
K\"ahler-Einstein metrics and treat some classes of manifolds that
satisfy weaker hypotheses than being Fano. .
\end{abstract}

\bigskip

\maketitle

\section{Introduction}

Let $X$ be an $n$-dimensional  projective  manifold with
seminegative canonical 
bundle and let $\Omega$ be a domain in the complex plane. We consider
curves $t\rightarrow \phi_t$, with $t$ in $\Omega$,  of metrics on $-K_X$  that have
plurisubharmonic 
variation so that $i\ddbar_{t, X}\phi\geq 0$ ( see section 2 for
notational conventions). Then $\phi$ solves the
homogenous Monge-Amp\`ere equation if  
\be
(i\ddbar\phi)^{n+1}=0.
\ee
By a
fundamental theorem of Chen,  \cite{Chen}, we can for any given $\phi_0$ defined
on the boundary of $\Omega$, smooth with nonnegative curvature on $X$
for $t$ fixed on $\partial\Omega$, find a solution of (1.1)
with $\phi_0$ as boundary values. This solution does in general not
need to be smooth (see \cite{1Donaldson}), but Chen's theorem asserts
that we can find a solution that has all mixed complex derivatives
bounded, i e $\ddbar_{t, X}\phi$ is bounded on $X\times \Omega$. The
solution equals the 
supremum (or maximum) of all subsolutions, i e all metrics with
semipositive curvature that are dominated by $\phi_0$ on the
boundary. Chen's proof is based on some of  the methods from Yau's proof of the
Calabi conjecture, so it is not so easy, but it is worth pointing out
that the existence of 
a generalized solution that is only bounded is much easier, see
section 2. On the
other hand, if we assume that $\phi$ is smooth and $i\ddbar_X\phi>0$
on $X$ for any 
$t$ fixed, then
$$
(i\ddbar\phi)^{n+1}=n c(\phi)(i\ddbar\phi)^n\wedge i dt\wedge d\bar t
$$
with 
$$
c(\phi)=\frac{\partial^2\phi}{\partial t\partial\bar
  t}-|\dbar\frac{\partial\phi}{\partial t}|^2_{i\ddbar_X\phi},
$$
where the norm in the last term is the norm with respect to the K\"ahler metric
$i\ddbar_X\phi$. Thus equation 1.1 is then equivalent to $c(\phi)=0$. 

The case when $\Omega=\{t; 0<\Re t<1\}$ is a strip  is of
particular interest. If the boundary data are also independent of $\Im
t$ the solution to 1.1 has a similar invariance property. A famous
observation of Semmes, \cite{Semmes} and Donaldson, \cite{Donaldson}
is that the equation $c(\phi)=0$ then is the equation for a geodesic
in the space of K\"ahler potentials. Chen's theorem then {\it almost}
implies that 
any two points in the space of K\"ahler potentials can be joined by a 
geodesic, the proviso being that we might not be able to keep
smoothness or strict positivity along all of the curve. This problem
causes some difficulties in applications, one of which we will
address in this paper.

The next theorem is a direct consequence of the results in
\cite{2Berndtsson}.
\begin{thm}Assume that $-K_X\geq 0$ and let
let $\phi_t$ be a  curve of  metrics on $-K_X$
such that
$$
i\ddbar_{t,X}\phi\geq 0
$$
in the sense of currents. Then 
$$
\F(t):=-\log\int_X e^{-\phi_t}.
$$
 is subharmonic in $\Omega$. In particular, if $\phi_t$ does not
 depend on the imaginary part of $t$, $\F$ is convex. 
\end{thm}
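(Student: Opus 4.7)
The plan is to reinterpret $\int_X e^{-\phi_t}$ as the squared $L^{2}$-norm of a canonical holomorphic section of a direct image line bundle, and then invoke the curvature-positivity theorem for direct images from \cite{2Berndtsson}.

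Consider the trivial fibration $\pi:X\times\Omega\to\Omega$ and regard $-K_X$ as the relative anticanonical bundle on $X\times\Omega$, equipped with the metric $e^{-\phi}$. The hypothesis $i\ddbar_{t,X}\phi\geq 0$ says exactly that this line bundle has semipositive curvature on the total space. The essential point is that, since $L=-K_X$, we have $K_{X/\Omega}+L=\O_{X\times\Omega}$, so the fiberwise space $H^0(X_t,K_{X_t}+L|_{X_t})\cong H^0(X,\O_X)\cong\C$ is one-dimensional and is spanned by the constant section $u\equiv 1$. Under the natural pairing (a section of $K_X$ times its conjugate is a volume form), the fiberwise $L^2$-norm of $u$ weighted by $e^{-\phi_t}$ is
\begin{equation*}
\|u\|_t^2=\int_X e^{-\phi_t}.
\end{equation*}

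The direct image theorem of \cite{2Berndtsson} asserts that the induced metric on this direct image bundle has semipositive curvature. In the rank-one situation at hand this is precisely the statement that the weight $-\log\|u\|_t^2=\F(t)$ is plurisubharmonic on $\Omega$, which gives subharmonicity. The final assertion is then immediate: if $\phi_t$ is independent of $\Im t$, so is $\F(t)$, and a subharmonic function of $\Re t$ alone is convex.

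The one technical point I would need to address is that \cite{2Berndtsson} is typically formulated for smooth, strictly positively curved metrics, while here $i\ddbar_{t,X}\phi\geq 0$ is only assumed in the sense of currents. This is handled by the standard device of regularizing $\phi$ from above on relatively compact subdomains by a decreasing sequence of smooth metrics with $i\ddbar_{t,X}\phi_\nu>0$, applying the smooth case to each $\phi_\nu$, and passing to the limit using that the decreasing limit of plurisubharmonic functions is plurisubharmonic. I do not expect any real obstacle beyond this routine approximation; the substance of the theorem is entirely contained in the cited positivity result.
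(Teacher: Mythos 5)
Your proposal is correct and is essentially the paper's own argument: the theorem is obtained precisely by viewing $\int_X e^{-\phi_t}$ as $\|u\|_t^2$ for the canonical trivializing section $u\equiv 1$ of $K_X+(-K_X)$ and invoking the semipositivity of the direct image metric from \cite{2Berndtsson}, which already covers metrics that are only semipositive in the sense of currents. The only small caveat in your regularization remark is that a decreasing smooth approximation with strictly positive curvature is available when $-K_X>0$; for merely semipositive $-K_X$ one approximates with $i\ddbar\phi^\nu\geq-\epsilon_\nu\omega$ as in section 2.3, which changes nothing essential.
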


Here we interpret the integral over $X$ in the following way. For any
choice of local coordinates $z^j$ in some covering of $X$ by
coordinate neighbourhoods $U_j$, the metric $\phi_t$ is
represented by a local function $\phi_t^j$. The volume form
$$
c_n e^{-\phi^j_t}  dz^j\wedge\bar dz^j,
$$
where $c_n=i^{n^2}$ is a unimodular constant chosen to make the form
positive, is independent of the choice of local coordinates. We denote this
volume form by $e^{-\phi_t}$, see section 2. 

The results in \cite{2Berndtsson}
 deal with more
general line bundles $L$  over $X$, and  the trivial vector
bundle $E$ over $\Omega$ with fiber $H^0(X, K_X+L)$ with the
$L^2$-metric
$$
\|u\|^2_t=\int_X |u|^2 e^{-\phi_t},
$$
see section 2. The main result is then a formula for the curvature of
$E$ with the $L^2$-metric. In this paper we study the simplest special
case, $L=-K_X$. Then $K_X+L$ is trivial so $E$ is a line bundle and
Theorem 1.1 says that this line bundle has nonnegative curvature. 

Theorem 1.1 is formally analogous to the Brunn-Minkowski inequality
for the volumes of convex sets, and even more to its functional version,
Prekopa's theorem, \cite{Prekopa}. Prekopa's theorem states that if
$\phi$ is a convex function on $\R^{n+1}$, 
then
$$
f(t):=-\log\int_{\R^n}e^{-\phi_t}
$$
is convex. The complex counterpart of this is that we consider a
complex manifold $X$ with a family of volume forms $\mu_t$. In local
coordinates $z^j$  the volume form can be written as above
$c_n e^{-\phi^j_t} dz^j\wedge\bar dz^j$ , and if $\mu_t$ is
globally well defined $\phi^j_t$ are then the local representatives of
a metric, $\phi_t$, on $-K_X$. Convexity in Prekopa's theorem then
corresponds to 
positive, or at least semipositive, curvature of $\phi_t$, so $X$ must
be Fano, or its canonical bundle must have at least have seminegative
curvature  (in some sense:
$-K_X$ pseudoeffective would be the minimal requirement). The
assumption in Prekopa's theorem that the weight is convex with respect
to $x$ and $t$ together then correspond to the assumptions in Theorem
1.1.

If $K$ is a compact convex set in $\R^{n+1}$ we can take $\phi$ to
be equal to 0 in $K$ and $+\infty$ outside of $K$. Prekopa's theorem
then implies the Brunn-Minkowski theorem, saying that the logarithm of
the volumes of $n$-dimensional slices, $K_t$ of convex sets are
concave; concretely 
\be
|K_{(t+s)/2}|^2\leq |K_t||K_s|
\ee

The Brunn-Minkowski theorem has an important addendum which describes
the case of equality : If equality holds in (1.2)
then all the slices $K_t$ and $K_s$  are translates of each other
$$
K_t=K_s + (t-s)\mathbf v
$$
where $\mathbf v$ is some  vector in $\R^n$.
A little bit
artificially we can formulate this as saying that we move from one
slice to another via the flow of a constant vector field. 

\begin{remark}
It follows that from (1.2) and the natural homogenuity properties of
Lebesgue measure that $|K_t|^{1/n}$, is also concave. This ('additive
version')  is perhaps
the most common formulation of the Brunn-Minkowski inequalities, but
the logarithmic (or multiplicative) version above works better for
weighted volumes and in the complex setting.
 For the additive version
conditions for equality are more liberal; then $K_t$ may change not
only by translation but also by dilation 
(see \cite{Gardner}), but equality in the multiplicative case excludes
dilation. 
\end{remark}

A natural
question is then if one can draw a similar conclusion in the complex
setting described above. In \cite{2Berndtsson} we proved that this is
indeed so if $\phi$ is known to be smooth and strictly
plurisubharmonic on $X$ for $t$ fixed. The main result of this paper
is the extension of this to less regular situations. We keep the same
assumptions as in Theorem 1.1. 
\begin{thm}Assume that $H^{0,1}(X)=0$, and that the curve of metrics
 $\phi_t$ is 
  independent of the imaginary part of $t$.  Assume moreover that the
  metrics $\phi_t$ are uniformly bounded in the sense that for some
  smooth metric on $-K_X$, $\psi$,
$$
|\phi_t-\psi|\leq C.
$$
Then, if the function $\F$ in Theorem 1.1 is affine in a neighbourhood of
0 in $\Omega$, 
there is a (possibly time dependent) holomorphic vector field $V$ on
$X$ with flow $F_t$ such 
that
$$
F_t^*(\ddbar\phi_t) =\ddbar\phi_0.
$$
\end{thm}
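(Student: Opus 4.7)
The strategy is to first treat the case where $\phi_t$ is smooth and fibrewise strictly positively curved, deriving from the affineness of $\F$ the pointwise identity $c(\phi_t)\equiv 0$ and reading off a holomorphic vector field from it, and then to reduce the merely bounded case to this model by regularization.

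In the smooth strictly psh case, Theorem 1.1 arises as an instance of the curvature formula of \cite{2Berndtsson} applied to the direct-image bundle $E=\pi_*(K_{X/\Omega}-K_X)$, which is the trivial line bundle on $\Omega$ with Hermitian metric $\int_Xe^{-\phi_t}$. That formula expresses $\partial_t\partial_{\bar t}\F$ as the sum of the non-negative term $\int_X c(\phi_t)\,e^{-\phi_t}$ and a non-negative ``harmonic'' contribution depending on the projection of $\bar\partial\dot\phi_t$ onto the $\bar\partial$-harmonic $(0,1)$-forms with weight $e^{-\phi_t}$; the harmonic contribution is killed by $H^{0,1}(X)=0$. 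The affineness of $\F$ therefore forces $c(\phi_t)\equiv 0$. This identity says precisely that the $(1,0)$-vector field $V_t$ defined by $V_t\lrcorner(i\ddbar_X\phi_t)=-i\bar\partial\dot\phi_t$ is $\bar\partial$-closed, i.e.\ holomorphic on $X$, and a direct Lie-derivative computation using $c(\phi_t)=0$ then yields $\frac{d}{dt}F_t^*(\ddbar\phi_t)=0$ for its flow $F_t$, giving the conclusion in this case.

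For the bounded case I would regularize in two stages: mollify in the $t$-variable, add a small $\epsilon\psi$ to guarantee strict fibrewise positivity, and smooth each fibre by a Demailly or Bergman-kernel regularization. The resulting $\phi_t^{\epsilon,\nu}$ are smooth and strictly psh on $X$ and converge uniformly to $\phi_t$; the corresponding $\F^{\epsilon,\nu}$ are subharmonic, converge to $\F$, and satisfy $\ddbar_t\F^{\epsilon,\nu}\to 0$ in $L^1_{\loc}(\Omega)$. The smooth formula then forces
\[
\int_\Omega\int_X c(\phi_t^{\epsilon,\nu})\,e^{-\phi_t^{\epsilon,\nu}}\longrightarrow 0,
\]
which together with the uniform $L^\infty$-bound yields $L^2$-control on $\bar\partial\dot\phi_t^{\epsilon,\nu}$ and on the candidate vector fields $V_t^{\epsilon,\nu}$.

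The principal obstacle is this last passage to the limit. As $\epsilon\to 0$ the fibre metric $i\ddbar_X\phi_t^\epsilon$ may degenerate, so the local formula $V_t^i=g^{\bar j i}\partial_{\bar j}\dot\phi_t$ is not manifestly stable. The remedy is to exploit global rigidity: holomorphic vector fields on the compact manifold $X$ form a finite-dimensional space, and together with $H^{0,1}(X)=0$ (which is what allows one to convert approximate potentials into honest global holomorphic fields in the limit) this lets one extract a subsequential limit $V_t$. The identity $F_t^*(\ddbar\phi_t)=\ddbar\phi_0$ then follows by passing to the limit of closed positive currents in the regularized identity.
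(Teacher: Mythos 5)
Your reduction strategy founders exactly at the point you yourself flag as ``the principal obstacle,'' and the remedy you propose there does not close the gap. The approximate complex gradients $V_t^{\epsilon,\nu}$, obtained by contracting $\dbar\dot\phi_t^{\epsilon,\nu}$ against $(i\ddbar_X\phi_t^{\epsilon,\nu})^{-1}$, carry no bound in any fixed norm: the $L^2$ control you extract from $\int\int c(\phi_t^{\epsilon,\nu})e^{-\phi_t^{\epsilon,\nu}}\to 0$ is measured in the degenerating metrics $i\ddbar_X\phi_t^{\epsilon,\nu}$, and where these collapse the vector fields can blow up in every fixed metric. Finite-dimensionality of the space of holomorphic vector fields on $X$ cannot substitute for such a bound, because the $V_t^{\epsilon,\nu}$ are not holomorphic and hence do not lie in that finite-dimensional space; and $H^{0,1}(X)=0$ plays no role in ``converting approximate potentials into global holomorphic fields'' --- in the actual argument it is used only to make a certain $\dt$-equation uniquely solvable on closed forms. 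The idea that makes the theorem work, and that is absent from your proposal, is to bypass the complex gradient entirely during the limiting process: one works instead with the $(n-1,0)$-form $v_t$ solving $\dt v_t=\pi_\perp(\dof\, u)$, where $u$ is the canonical nowhere-vanishing section of the trivial bundle $K_X+(-K_X)$. This $v_t$ is defined by a $\dbar^*$-type equation relative to a \emph{fixed} background K\"ahler metric and the weight $e^{-\phi_t}$, so H\"ormander/closed-range estimates give $L^2$ bounds on $v_t^{\nu}$ depending only on the uniform bound $|\phi_t-\psi|\le C$ and on the Lipschitz bound for $\dof$ (which comes from convexity in $t$), uniformly in $\nu$ and $t$. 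One then takes a weak $L^2$ limit $v$, shows $\dbar_X v=0$ from the vanishing of the term $\int_X\|\dbar v_t\|^2e^{-\phi_t}$ in the curvature formula, and only \emph{afterwards} defines $V_t$ by $-v_t=V_t\rfloor u$, which is possible because $u$ never vanishes; the relation $V_t\rfloor i\ddbar\phi_t=i\dbar\dof$ is then a consequence. In short, the nonvanishing of $i\ddbar_X\phi_t$ is traded for the nonvanishing of $u$, and this trade is precisely what your proposal is missing.

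Two smaller points. First, even in the smooth strictly positive model case your logic is off: $c(\phi)\equiv 0$ is the homogeneous Monge--Amp\`ere (geodesic) equation and does \emph{not} say that $V_t$ is $\dbar$-closed on $X$; holomorphy of $V_t$ on $X$ comes from the vanishing of the \emph{other} nonnegative term in the curvature formula (the one involving $\dbar v_t$), while $c(\phi)=0$ is what one uses to prove holomorphy of $V_t$ in the $t$-direction. Both terms do vanish when $\F$ is affine, so the correct conclusion is reachable, but the two vanishings must be kept separate. Second, the passage from ``$v$ holomorphic on $X$ for each $t$'' to ``$V_t$ holomorphic in $t$'' in the nonsmooth setting is itself delicate: the limit $v$ is only an $L^2$ object, so holomorphy in $t$ has to be formulated and verified weakly, using the Lipschitz continuity in $t$ of the bounded geodesic and the stability of the projections $\pi_\perp$ under perturbation of the weight; your proposal does not address this step beyond an appeal to ``passing to the limit of closed positive currents.''
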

The same conclusion can also be drawn without the assumption that
$\phi_t$ be independent of the imaginary part of $t$, and then
assuming that $\F$ be harmonic instead of affine,  but the proof
then seems to require more regularity assumptions. For simplicity we
therefore treat only the case when $\phi_t$ is independent of $t$,
which anyway seems to be the most useful in applications. 
\bigskip

\noindent This theorem is useful in view of the discussion above on the possible lack of
regularity of geodesics.As we shall
see in section 2 the existence of a generalized geodesic satisfying
the boundedness assumption in Theorem 1.2 is almost trivial.  One
motivation for the theorem is to give a new proof of the Bando-Mabuchi
uniqueness theorem for K\"ahler-Einstein metrics on Fano
manifolds. Recall that a metric $\omega_\psi=i\ddbar\psi$, with
$\psi$ a metric on $-K_X$ solves the  K\"ahler-Einstein equation if 
$$
\text{Ric}(\omega_\psi)=\omega_\psi
$$
or equivalently if for some positive $a$
$$
e^{-\psi}=a(i\ddbar\psi)^n,
$$
where we use the convention above to interpret $e^{-\psi}$ as a volume
form. By a celebrated theorem of Bando and Mabuchi any two
K\"ahler-Einstein metrics $i\ddbar\phi_0$ and 
$i\ddbar\phi_1$ are related via the time-one flow of a holomorphic vector
field. In section 4 we shall give a proof of this fact by
joining $\phi_0$ and $\phi_1$ by a geodesic and applying Theorem
1.2.

It should be noted that a similar proof of the  Bando-Mabuchi
theorem has already been given by
Berman, \cite{Berman}. The difference between his proof and ours is
that he uses the weaker version of Theorem 1.2 from
\cite{2Berndtsson}. He then needs to prove that the geodesic joining
two K\"ahler-Einstein metrics is in fact smooth, which we do not
need, and we also avoid the use  of 
Chen's theorem since we only need the existence of a bounded
geodesic.

 A minimal assumption in Theorem 1.2 would be that $e^{-\phi_t}$ be
 integrable, instead of bounded. I do not know if the theorem holds
 in this generality, but in section 6 we will consider an
 intermediate situation where $\phi_t=\chi_t +\psi$, with $\chi_t$
 bounded and $\psi$ such that $e^{-\psi}$ is integrable, so that the
 singularities don't change with $t$. Under various positivity
 assumptions we are then able to proof a version of Theorem 1.2.
 
Apart from making the problem technically simpler, this extra
assumption that  $\phi_t=\chi_t +\psi$ also introduces an additional
structure, which seems interesting in itself. 
In section 6 we use it to  give a generalization of the Bando-Mabuchi
theorem to certain 'twisted' K\"ahler-Einstein equations, 
$$
\text{Ric}(\omega)=\omega +\theta
$$
considered
in \cite{Szekelyhidi},\cite{2Berman} and  \cite{2Donaldson}. Here
$\theta$ is a fixed 
positive $(1,1)$-current, that may  e g be the current of integration
on a klt divisor. The solutions to these
equations are then not necessarily smooth and it seems to be hard to prove
uniqueness using the original methods of Bando and Mabuchi.

Another paper that is very much related to this one is \cite{Berman
  et al}, by Berman -Boucksom-Guedj-Zeriahi. There is introduced a
variational approach to Monge-Ampere equations and K\"ahler-Einstein
equations in a nonsmooth setting and a uniqueness theorem a la
Bando-Mabuchi is proved, using continuous geodesics as we  do here,
but  in a somewhat 
less general situation. I would like to thank all of these authors   for
helpful discussions, and Robert Berman in particular for
proposing the generalized Bando-Mabuchi theorem in section 6.

\section{Preliminaries}

\subsection{Notation}
Let $L$ be a line bundle over a complex manifold $X$, and let $U_j$ be a
covering of the manifold by open sets over which $L$ is locally
trivial. A section of $L$ is then represented by a collection of
complex valued functions $s_j$ on $U_j$ that are related by the
transition functions of the bundle, $s_j=g_{j k} s_k$. A metric on $L$
is given by a collection of realvalued functions $\phi^j$ on $U_j$,
related so that
$$
|s_j|^2 e^{-\phi^j}=:|s|^2 e^{-\phi}=:|s|^2_\phi
$$
is globally well defined. We will write $\phi$ for the collection
$\phi^j$, and refer to $\phi$ as the metric on $L$, although it might
be more appropriate to call $e^{-\phi}$ the metric. (Some authors call
$\phi$ the 'weight' of the metric.)

A metric $\phi$ on $L$ induces an $L^2$-metric on the adjoint bundle
$K_X+L$. A section $u$ of $K_X+L$ can be written locally as
$$
u= dz\otimes s
$$
where $dz=dz_1\wedge ...dz_n$ for some choice of local coordinates
and $s$ is a section of $L$. We let
$$
|u|^2 e^{-\phi}:= c_n dz\wedge d\bar z |s|_\phi^2;
$$
it is a volume form on $X$. The $L^2$-norm of $u$ is
$$
\|u\|^2:=\int_X |u|^2 e^{-\phi}.
$$
Note that the $L^2$ norm depends only on the metric $\phi$ on $L$ and
does not involve any choice of metric on the manifold $X$. 

In this paper we will be mainly interested in the case when $L=-K_X$
is the anticanonical bundle. Then the adjoint bundle $K_x+L$ is
trivial and is canonically isomorphic to $X\times \C$ if we have
chosen an isomorphism between $L$ and $-K_X$. This bundle then has a
canonical trivialising section, $u$ identically equal to 1. With the
notation above
$$
\|1\|^2 =\int_X |1|^2 e^{-\phi}=\int_X e^{-\phi}.
$$
This means explicitly that we interpret the volume form
$e^{-\phi}$ as 
$$
dz^j\wedge d\bar z^j e^{-\phi_j}
$$
where $e^{-\phi^j}= |(dz^j)^{-1}|_\phi^2$ is the local representative of
the metric for the frame determined by the local coordinates. Notice
that this is consistent with the conventions indicated in the
introduction. 

\subsection{Bounded geodesics}
We now consider curves $t\rightarrow \phi_t$ of metrics on the line bundle
$L$. Here $t$ is a complex parameter but we shall (almost) only look
at curves that do not depend on the imaginary part of $t$. We say that
$\phi_t$ is a subgeodesic if $\phi_t$ is upper semicontinuous and
$i\ddbar_{t, X}\phi_t\geq 0$,  so that local
representatives are plurisubharmonic with respect to $t$ and $X$
jointly. We say that $\phi_t$ is bounded if
$$
|\phi_t-\psi|\leq C
$$
for some constant $C$ and some (hence any) smooth metric on $L$. For
bounded geodesics the complex Monge-Ampere operator is well defined
and we say that $\phi_t$ is a (generalized) geodesic if
$$
(i\ddbar_{t, X} \phi_t)^{n+1}=0.
$$

\bigskip

\noindent Let $\phi_0$ and $\phi_1$ be two bounded metrics on $L$ over
$X$ satisfying
$i\ddbar\phi_{0,1}\geq 0$.  We claim that there is a bounded geodesic
$\phi_t$ defined for the real part of $t$ between 0 and 1, such that
$$
\lim_{t\rightarrow 0,1} \phi_t =\phi_{0,1}
$$
uniformly on $X$. The curve $\phi_t$ is defined by
\be
\phi_t=\sup \{\psi_t\}
\ee
where the supremum is taken over all plurisubharmonic $\psi_t$ with 
$$
 \lim_{t\rightarrow 0,1} \psi_t \leq \phi_{0,1}.
$$
To prove that $\phi_t$ defined in this way has the desired properties
we first construct a barrier 
$$
\chi_t =\text{max} (\phi_0 -A\Re t, \phi_1 +A(\Re t -1)).
$$
Clearly $\chi$ is plurisubharmonic and has the right boundary values
if $A$ is sufficiently large. Therefore the supremum in (2.1) is the
same if we restrict it to $\psi$ that are larger than $\chi$. For such
$\psi$ the onesided derivative at 0 is larger than $-A$ and the
onesided derivative at 1 is smaller than  $A$. Since we may moreover
assume that $\psi$ is independent of the imaginary part of $t$, $\psi$
is convex in $t$ so the
derivative with respect to $t$ increases, and must therefore lie
between $-A$ and $A$. Hence $\phi_t$ satisfies
$$
\phi_0 -A\Re t\leq \phi_t\leq \phi_0 +A\Re t
$$
and a similar estimate at 1. Thus $\phi_t$ has the right boundary
values uniformly. In addition, the upper semicontinuous regularization
$\phi_t^*$ 
of $\phi_t$ must satisfy the same estimate. Since $\phi_t^*$ is
plurisubharmonic it belongs to the class of competitors for $\phi_t$
and must therefore coincide  with $\phi_t$, so $\phi_t$ is
plurisubharmonic. That finally $\phi_t$ solves the homogenuous
Monge-Ampere equation follows from the fact that it is maximal with
given boundary values, see e g \cite{Guedj-Zeriahi}. 

Notice that as a byproduct of the proof we have seen that the geodesic
joining two bounded metrics is uniformly Lipschitz in $t$. This fact
will be very useful later on. 
\subsection{Approximation of metrics and subgeodesics}

In the proofs we will need to approximate our metrics that are only
bounded, and sometimes not even bounded, by smooth metrics. Since we
do not want to lose too much of the positivity of curvature
this causes some complications and we collect here some results on
approximation of metrics that we will use. An extensive treatment of
these matters can be found in \cite{Demailly}. Here we will need only
the simplest part of this theory and we also refer to
\cite{Blocki-Kolodziej} for an elementary proof of the result we
need. 

In general a singular metric $\phi$ with $i\ddbar\phi\geq 0$ can not
be approximated by a decreasing sequnece of  smooth metrics with
nonnegative curvature. A 
basic fact is however (see \cite{Blocki-Kolodziej}, Theorem 1.1) that
this is possible if the line bundle in question is positive, so that
it has some smooth metric
of {\it strictly } positive curvature. This is all we need in the main
case of a Fano manifold.

The approximation result for positive bundles  also holds for $\mathbb
Q$-line bundles; just multiply by some sufficiently divisible integer,
and even for $\R$-bundles. In this paper we will also be interested in
line bundles that are only semipositive. If $X$ is projective, as we
 assume, the basic fact above implies that we then can approximate
any singular metric with nonnegative curvature with a decreasing
sequence of smooth $\phi^\nu$s, satisfying
$$
i\ddbar\phi^\nu\geq -\epsilon_\nu\omega
$$
where $\omega$ is some K\"ahler form and $\epsilon_\nu$ tends to
zero. To see this we basically only need to apply the result above for
the positive case to the $\R$-bundle $L+\epsilon F$ where $F$ is
positive. If $\psi$ is a smooth metric with positive curvature on $F$,
we approximate $\phi+\epsilon \psi$ by smooth metrics $\chi_\nu$ with
positive curvature. Then $\phi^\nu=\chi_\nu-\epsilon \psi$ satisfies
$$
i\ddbar \phi^\nu\geq -\epsilon \omega
$$
for $\omega=i\ddbar\psi$. Then let $\epsilon$ go to zero and choose a
diagonal sequence. This sequence may not be decreasing, but an easy
argument using Dini's lemma shows that we 
may get a decreasing sequence this way.

At one point we also wish to treat a bundle that is not even
semipositive, but only effective. It then has a global holomorphic
section, $s$, and the singular metric we are interested in is
$\log|s|^2$, or some positive multiple of it. We then let $\psi$ be
any smooth metric on the bundle and approximate by
$$
\phi^\nu:=\log( |s|^2 + \nu^{-1}e^{\psi}).
$$
Explicit computation shows that $i\ddbar\phi^\nu\geq -C\omega$ where $C$
is some fixed constant. Moreover, outside any fixed neighbourhood of
the zerodivisor of $s$,
$$
i\ddbar\phi^\nu\geq -\epsilon_\nu\omega
$$
with $\epsilon_\nu$ tending to zero. This weak approximation will be
enough for our 
purposes.

\section{The smooth case}

In this section we let $L$ be a holomorphic line bundle over $X$ and
$\Omega$ be a smoothly bounded open set in $\C$. We consider
the trivial  vector bundle $E$ 
over $\Omega$ with fiber $H^0(X, K_X+L)$. Let now $\phi_t$ be a smooth
curve of 
metrics on $L$  of semipositive curvature.  For any fixed $t$,
$\phi_t$ 
 induces an
$L^2$-norm on $H^0(X, K_X+L)$ as described in the previous section
$$
\|u\|^2_t=\int_X |u|^2 e^{-\phi_t},
$$
and as $t$ varies we get an hermitian metric on the vector bundle
$E$. 

We now recall a formula for the curvature of $E$ with this metric
from \cite{1Berndtsson},\cite{3Berndtsson}. Let for each $t$ in $\Omega$ 
$$
\dt= e^{\phi_t}\partial e^{-\phi_t}=\partial-\partial \phi_t\wedge.
$$

If $\alpha$ is an $(n,1)$-form on $X$ with values in $L$, and we write
$\alpha=v\wedge \omega$, where $\omega$ is our fixed K\"ahler form on
$X$, then (modulo a sign)
$$
\dt v=\dbar^*_{\phi_t}\alpha,
$$
the adjoint of the $\dbar$-operator for the metric $\phi_t$. In
particular this means that the operator $\dt$ is well defined on
$L$-valued forms. 

This also
means that for any $t$ we can solve the equation
$$
\dt v=\eta,
$$
if $\eta$ is an $L$-valued $(n,0)$-form that is orthogonal to the
space of holomorphic $L$-valued forms (see remark 2 below). Moreover
by choosing $\alpha=v\wedge\omega$ 
orthogonal to the kernel of $\dbar^*_{\phi_t}$ we can assume that
$\alpha$ is $\dbar$-closed, so that $\dbar v\wedge \omega=0$. Hence,
with this choice, $\dbar v$ is a primitive form. 
If, as we assume from now, the
cohomology $H^{n, 1}(X,L)=0$, the $\dbar$-operator is surjective on
$\dbar$-closed forms, so
the adjoint is injective, and $v$ is uniquely determined by $\eta$. 

\begin{remark}
 The reason we can always solve this equation for $t$ and
$\phi$ 
fixed is that the $\dbar$-operator from $L$-valued $(n,0)$-forms to
$(n,1)$-forms  on $X$ has closed range. This implies that the adjoint
operator
$\dbar^*_{\phi_t}$ also has closed range and that its range is equal
to the orthogonal complement of the kernel of $\dbar$. Moreover, that
$\dbar$ has closed range means precisely that for any $(n,1)$-form in
the range of $\dbar$ we can solve the equation $\dbar f=\alpha$ with
an estimate 
$$
\|f\|\leq C\|\alpha\|
$$
and it follows from functional analysis that we then can solve $\dt
v=\eta$ with the bound
$$
\| v\|\leq C\|\eta\|
$$
where $C$ is {\it the same} constant. In case all metrics $\phi_t$ are
of equivalent size, so that $|\phi_t-\phi_{t_0}|\leq A$ it follows
that we can solve $\dt v=\eta$ with an $L^2$-estimate independent of
$t$. 
\end{remark}

Let $u_t$
be a holomorphic section of the bundle $E$ and let 
$$
\dof:=\frac{\partial\phi}{\partial t}.
$$

\bigskip

For each $t$ we now solve
\be
\dt v_t=\pi_\perp(\dof u_t),
\ee
where $\pi_\perp$ is the orthogonal projection on the orthogonal
complement of the space of holomorphic forms, with respect to the
$L^2$-norm  $\|\cdot\|_t^2$. With this choice of $v_t$ we obtain the
following formula for the curvature of $E$, see \cite{1Berndtsson},
\cite{3Berndtsson}. In the formula, $p$ stands for the natural
projection map from $X\times\Omega$ to $\Omega$ and $p_*(T)$ is the
pushforward of a differential form or current. When $T$ is a smooth
form this is the fiberwise integral of $T$. 
\begin{thm} Let $\Theta$ be the curvature form on $E$ and let $u_t$ be
  a holomorphic section of $E$. For each $t$ in
  $\Omega$ let $v_t$ solve (3.1) and be such that $\dbar_X
  v_t\wedge \omega=0$. 
Put
$$
\hat u=u_t-dt\wedge v_t.
$$
Then
\be
\langle\Theta u_t,u_t\rangle_t= p_*(c_n i\ddbar\phi \wedge \hat
u\wedge\overline{\hat u} e^{-\phi})  
+\int_X \|\dbar v_t\|^2 e^{-\phi_t}idt\wedge d\bar t.
\ee
\end{thm}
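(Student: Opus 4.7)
The plan is to prove (3.2) by combining three ingredients: a direct Hessian computation via differentiation under the integral sign, the identification of the Chern derivative $D'u_t$ as an orthogonal projection, and a Bochner--Kodaira--Nakano identity on $X$ to match the result with the geometric right-hand side.

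First, I would invoke the general curvature formula for a holomorphic section of a Hermitian holomorphic bundle (with the paper's sign convention),
\[
\langle \Theta u_t, u_t\rangle = -\partial\bar\partial\|u_t\|_t^2 + \|D'u_t\|_t^2,
\]
and evaluate $\partial_{\bar t}\partial_t\|u_t\|_t^2$ by differentiating $\int_X|u_t|^2 e^{-\phi_t}$ twice under the integral sign. Using $\bar\partial_X u_t=0$ and $\partial_{\bar t}u_t=0$ (holomorphicity in $X$ and $t$), this yields
\[
\partial_{\bar t}\partial_t\|u_t\|_t^2 = \int_X |\partial_t u_t - \dof\, u_t|^2 e^{-\phi_t} - \int_X|u_t|^2 \partial_t\partial_{\bar t}\phi\, e^{-\phi_t}.
\]

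Next, I would identify $D'u_t$. Metric compatibility of the Chern connection forces $\partial_t\|u_t\|_t^2 = \langle D'_{\partial_t}u_t, u_t\rangle_t$; testing against constant sections of the trivial holomorphic bundle $E$ shows that $D'_{\partial_t}u_t$ is the $L^2(\phi_t)$-orthogonal projection $\pi_H(\partial_t u_t - \dof u_t)$ onto $H^0(X,K_X+L)$. Since $\partial_t u_t\in H^0$ already and since (3.1) rewrites as $\pi_\perp(\dof u_t)=\dt v_t$, this simplifies to $D'_{\partial_t}u_t = \partial_t u_t - \dof u_t + \dt v_t$. The orthogonal decomposition $\partial_t u_t - \dof u_t = D'_{\partial_t}u_t - \dt v_t$ and Pythagoras give $\|\partial_t u_t - \dof u_t\|_t^2 = \|D'_{\partial_t}u_t\|_t^2 + \|\dt v_t\|_t^2$; substituting back into the curvature identity cancels the $\|D'u_t\|^2$ contributions and leaves
\[
\langle\Theta u_t, u_t\rangle = \Bigl(\int_X|u_t|^2 \partial_t\partial_{\bar t}\phi\, e^{-\phi_t} - \|\dt v_t\|_t^2\Bigr)\, i\, dt\wedge d\bar t.
\]

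The last step is to reshape this as the right-hand side of (3.2). Expanding $\hat u = u_t - dt\wedge v_t$ and decomposing $i\ddbar\phi$ on $X\times\Omega$ into its $(X,X)$, mixed, and $(t,\bar t)$ bidegree components, the pushforward $p_*(c_n i\ddbar\phi\wedge \hat u\wedge\bar{\hat u}\, e^{-\phi})$ breaks into four fiber-integrals on $X$: the $(t,\bar t)$-piece contributes $\int_X \partial_t\partial_{\bar t}\phi\,|u_t|^2 e^{-\phi}$; the $(X,X)$-piece contributes $\int_X i\ddbar_X\phi\wedge v_t\wedge\bar v_t\, e^{-\phi}$; and the mixed pieces, after integration by parts on $X$ using $\bar\partial_X u_t=0$ together with (3.1), reassemble into further $\|\dt v_t\|^2$-contributions. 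Combined with the explicit $\int_X\|\dbar v_t\|^2 e^{-\phi}\, i\, dt\wedge d\bar t$ term in (3.2), these are then reconciled with the formula just derived via the Bochner--Kodaira--Nakano identity for primitive $L$-valued $(n-1,0)$-forms,
\[
\|\dt v\|_t^2 = \|\dbar_X v\|_t^2 + \int_X i\ddbar_X\phi\wedge v\wedge\bar v\, e^{-\phi_t},
\]
valid precisely when $\dbar_X v\wedge\omega=0$. The main obstacle is the careful sign and bidegree bookkeeping in the four-term pushforward expansion --- in particular, the $(-1)^n$ factors from anticommuting $n$-forms with $dt$ and $d\bar t$, and the verification that the mixed-term integration by parts really produces the needed $\|\dt v_t\|^2$-terms --- together with the derivation of the primitive Bochner--Kodaira--Nakano identity above, which rests on the twisted K\"ahler commutator $[\Lambda,\dt] = i\dbar^{\,*}_{\phi_t}$ and takes its clean form only because of the primitivity condition $\dbar_X v_t\wedge\omega=0$ imposed in the hypothesis.
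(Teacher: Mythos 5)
The paper states Theorem 3.1 without proof, citing \cite{1Berndtsson} and \cite{3Berndtsson}, and your outline is a correct reconstruction of the argument given in those references. The three ingredients you isolate --- the second-variation formula for $\|u_t\|_t^2$ obtained by differentiating under the integral, the identification $D'_{\partial_t}u_t=\partial_t u_t-\dof\, u_t+\dt v_t$ via (3.1) together with Pythagoras, and the Bochner--Kodaira--Nakano identity for $v_t$ under the primitivity condition $\dbar_X v_t\wedge\omega=0$ --- are exactly what makes both sides of (3.2) equal to $\bigl(\int_X \partial_t\partial_{\bar t}\phi\,|u_t|^2e^{-\phi_t}-\|\dt v_t\|_t^2\bigr)\,i\,dt\wedge d\bar t$, so the remaining work is only the sign and bidegree bookkeeping you already flag.
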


\begin{remark} This is not quite the same formula as the one used in
  \cite{2Berndtsson} which can be seen as corresponding to a different
  choice of $v_t$.
\end{remark}

If the curvature acting on $u_t$ vanishes it follows that both terms
in the right hand side of (3.2) vanish. In particular, $v_t$ must be a
holomorphic form. To continue from there we first assume (like in
\cite{2Berndtsson})  that
$i\ddbar\phi_t>0$ on $X$. Taking $\dbar$ of formula 3.1 we get 
$$
\dbar\dt v_t=\dbar\dof\wedge u_t.
$$
Using
$$
\dbar\dt +\dt\dbar=\ddbar\phi_t
$$
we get if $v_t$ is holomorphic that
$$
\ddbar\phi_t\wedge v_t=\dbar\dof\wedge u_t.
$$
The complex gradient of the function $i\dof$ with respect to the K\"ahler
metric $i\ddbar\phi_t$ is the $(1,0)$-vector
field defined by
$$
V_t\rfloor i\ddbar\phi_t=i\dbar\dof.
$$
Since $\ddbar\phi_t\wedge u_t=0$ for bidegree reasons we get
\be
\ddbar\phi_t\wedge v_t=\dbar\dof\wedge
u=(V_t\rfloor\ddbar\phi_t)\wedge u=
-\ddbar\phi_t\wedge (V_t\rfloor u).
\ee
If $i\ddbar\phi_t>0$ we find that
$$
-v_t=V_t\rfloor u.
$$
If $v_t$ is holomorphic it follows that  $V_t$ is a holomorphic vector
field - outside of the zerodivisor of $u_t$ and therefore everywhere
since the complex gradient is smooth under our hypotheses. If we
assume that $X$ carries no nontrivial holomorphic vector fields, $V_t$
and hence $v_t$ must vanish so $\dof$ is holomorphic, hence constant. 
Hence
$$
\ddbar\dof=0
$$
so $\ddbar\phi_t$ is independent of $t$. 
In general - if there are nontrivial holomorphic vector fields - we
get that the Lie derivative of $\ddbar\phi_t$ equals
$$
L_{V_t}\ddbar\phi_t=\partial
V_t\rfloor\ddbar\phi_t=\ddbar\dof=\frac{\partial}{\partial
  t}\ddbar\phi_t .
$$
Together with an additional argument showing that $V_t$ must be
holomorphic with respect to $t$ as well (see below) this gives that
$\ddbar\phi_t$ moves with the flow of the holomorphic vector field
which is what we want to prove.

For this  it is essential that the metrics $\phi_t$ be
strictly positive on $X$ for $t$ fixed, but we shall now see that
there is a way to get around this difficulty, at least in some special
cases.

The main case that we will consider is when the canonical bundle
of $X$ is seminegative, so we can take $L=-K_X$. Then $K_X+L$ is the
trivial bundle and we fix  a nonvanishing trivializing section
$u=1$. Then the constant section $t\rightarrow u_t=u$ is  a trivializing
section of the (line) bundle $E$. We write
$$
\F(t)=-\log \|u\|_t^2=-\log\int_X |u|^2 e^{-\phi_t}=-\log\int_X e^{-\phi_t}.
$$
Still assuming that $\phi$ is smooth, but perhaps not strictly
positive on $X$, we can apply  the curvature formula in Theorem 3.1
with $u_t=u$
and get
$$
\|u_t\|^2_t i\ddbar_t\F=\langle\Theta u_t,u_t\rangle_t= p_*(c_n
i\ddbar\phi \wedge \hat 
u\wedge\overline{\hat u} e^{-\phi_t})
+\int_X \|\dbar v_t\|^2 e^{-\phi_t}idt\wedge d\bar t.
$$
If $\F$ is harmonic, the curvature vanishes and it follows that $v_t$
is holomorphic on $X$ for any $t$ fixed. Since $u$ never vanishes we
can {\it define} a holomorphic vector field $V_t$ by
$$
-v_t=V_t\rfloor u.
$$
Almost as before we get 
$$
\dbar\dof\wedge u=\ddbar\phi_t\wedge v_t=-\ddbar\phi_t\wedge (V_t\rfloor u)
=(V_t\rfloor\ddbar\phi_t)\wedge u,
$$
which implies that
$$
V_t\rfloor i\ddbar\phi_t=i\dbar\dof.
$$
if  $\mathbf{u}$ never vanishes. This is the important point; we have been able
to trade the nonvanishing of $i\ddbar\phi_t$ for the nonvanishing of
$u$. This is where we use that the line bundle we are dealing with is
$L=-K_X$.

We also get 
the formula for  the Lie derivative of
$\ddbar\phi_t$ along $V_t$
\be
L_{V_t}\ddbar\phi_t=\partial
V_t\rfloor\ddbar\phi_t=\ddbar\dof=\frac{\partial}{\partial
  t}\ddbar\phi_t .
\ee
To be able to conclude from here we also need to prove that $V_t$
depends holomorphically on $t$. For this we will use the first term in
the curvature formula, which also has to vanish. It follows that 
$$
i\ddbar\phi \wedge \hat
u\wedge\overline{\hat u}
$$
has to vanish identically. Since this is a semidefinite form in $\hat
u$ it follows that
\be
\ddbar\phi \wedge \hat u=0.
\ee

Considering the part of this expression that contains $dt\wedge d\bar
t$ we see that
\be
\mu:=\frac{\partial^2\phi}{\partial t\partial \bar
  t}-\partial_X(\frac{\partial\phi}{\partial\bar t})(V_t)=0.
\ee

\bigskip

If $\ddbar_X\phi_t>0$, $\mu$ is easily seen to be equal to the
function $c(\phi)$ defined in the introduction, so the vanishing of
$\mu$ is then equivalent to the homogenous Monge-Amp\`ere equation.
In \cite{2Berndtsson} we showed that $\partial V_t/\partial \bar t=0$
by realizing this vector field as the complex gradient of the function
$c(\phi)$ which has to vanish if the curvature is zero. Here, where we
no longer assume strict postivity of $\phi_t$ along $X$ we have the
same problems as before to define the complex gradient. Therefore we
follow the same route as before, and start by studying  $\partial
v_t/\partial \bar t$ instead.

\bigskip

Recall that
$$
\dt v_t=\dof\wedge u +h_t
$$
where $h_t$ is holomorphic on $X$ for each $t$ fixed. As we have seen
in the beginning of this section, $v_t$ is uniquely determined, and it
is not hard to see that it depends smoothly on $t$ if $\phi$ is
smooth. Differentiating
with respect to $\bar t$ we obtain
$$
\dt\frac{\partial v_t}{\partial \bar t}=\left [\frac{\partial^2\phi}{\partial
  t\partial \bar t}-\partial_X(\frac{\partial\phi}{\partial\bar
  t})(V_t)\right ]\wedge u +\frac{\partial h_t}{\partial\bar t}.
$$
Since the left hand side is automatically orthogonal to holomorphic
forms, we get that
$$
\dt\frac{\partial v_t}{\partial \bar t}=\pi_\perp(\mu)=0,
$$
since $\mu=0$ by (3.6). Again, this means that $\partial
v_t/\partial \bar t =0$ since $\partial v_t/\partial \bar
t\wedge\omega$ is still $\dbar_X$-closed, and 
the cohomological assumption implies
that $\dt$ is injective on closed forms.  

All in all, $v_t$ is holomorphic in $t$, so $V_t$ is holomorphic on
$X\times\Omega$. We can now conclude the proof in the same way as in
\cite{2Berndtsson}. Define a holomorphic vector field $\V$ on
$X\times\Omega$ by  
$$
\V:=V_t -\frac{\partial}{\partial t}  .
$$
Let $\eta$ be the form $\ddbar_X\phi_t$ on $\X$. Then formula 2.4 says
that the Lie derivative
$$
L_\V\eta=0
$$
on $X$. It follows that $\eta$ is invariant under the flow of $\V$ so
$\ddbar\phi_t$ moves by the flow of a holomorphic family of
automorphisms of $X$.

\section{ The nonsmooth case}

In the general case we can  write our metric $\phi$ as
the uniform limit of a  
sequence of smooth  metrics, $\phi^\nu$, with $i\ddbar\phi^\nu\geq
-\epsilon_\nu\omega$, where $\epsilon_\nu$ tends to zero, see section
2.3. Note also 
that in case we assume that $-K_X>0$ we can even 
approximate with metrics of strictly positive curvature. The presence
of the negative term $-\epsilon_\nu \omega$ causes some minor
notational  problems
in the estimates below. We will therefore carry out the proof under
the assumptions that  $i\ddbar\phi^\nu\geq 0$ and leave the necessary
modifications to the reader. 

\bigskip

\noindent Let $\F_\nu$ be
defined the same way as $\F$, but using the weights $\phi^\nu$
instead. Then 
$$
i\ddbar\F_\nu
$$
goes to zero weakly on $\Omega$. We  get a sequence of $(n-1,0)$
forms $v^\nu_t$, solving
$$
\dt v^\nu_t=\pi_\perp(\dot{\phi_t^\nu} u)
$$
for $\phi=\phi_\nu$. By Remark 1, we have an $L^2$-estimate for
$v^\nu_t$ in terms of the $L^2$ norm of $\dofnu$, with the constant in
the estimate independent of $t$ and $\nu$. Since $\dot{\phi_t^\nu}$ is
uniformly bounded by section 2.2, it follows that we get a
uniform bound for the $L^2$-norms of $v_t^\nu$ over all of
$X\times\Omega$. Therefore we can select a 
subsequence of $v_t^\nu$ that converges weakly to a form $v$ in
$L^2$. Since $i\ddbar\F_\nu$ tends to zero weakly, Theorem 2.1 shows
that the $L^2$-norm of $\dbar_X v^\nu$ over $X\times K$ goes to zero
for any compact $K$ in $\Omega$, so $\dbar_X v=0$. Moreover
$$
\dt_X v= \pi_\perp (\dot{\phi}u)
$$
in the (weak ) sense that 
$$
\int_{X\times\Omega}dt\wedge d\bar t\wedge v\wedge\overline{\dbar W} e^{-\phi}=
\int_{X\times\Omega}dt\wedge d\bar t\wedge
\pi_\perp(\dot{\phi}u)\wedge\overline{ W }e^{-\phi} 
$$
for any smooth form $W$ of the appropriate degree. 

\bigskip

As before this ends the argument if there are no nontrivial holomorphic vector
fields on $X$. Then $v$ must be zero, so $\dof$ is holomorphic, hence
constant. In the general case, we finish by showing that $v_t$ is
holomorphic in $t$. The difficulty is that we don't know  any
regularity of $v_t$ 
except that it lies in $L^2$, so we need to formulate 
holomorphicity weakly. We will use two elementary lemmas that we state
without proof. The first one allows us get good convergence
properties for geodesics, when the metrics only depend on the real
part of $t$ and therfore are convex with respect to $t$.

\begin{lma}
Let $f_\nu$ be a sequence of smooth convex functions on an interval in $\R$
that converge uniformly to the convex function $f$. Let $a$ be a point
in the interval such that $f'(a)$ exists. Then $f_\nu'(a)$ converge
to $f'(a)$. Since a convex function is differentiable almost
everywhere it follows that $f'_\nu$ converges to $f'$ almost
everywhere, with dominated convergence on any compact subinterval.
\end{lma}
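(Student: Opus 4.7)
The proof is a short exercise in convex analysis, using the monotonicity of difference quotients for convex functions together with uniform convergence. The plan is as follows.

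First I would exploit the fundamental monotonicity property: for any convex function $g$ and any $h>0$,
$$
\frac{g(a)-g(a-h)}{h}\;\leq\; g'_-(a)\;\leq\; g'_+(a)\;\leq\;\frac{g(a+h)-g(a)}{h}.
$$
Applying this with $g=f_\nu$ (where left and right derivatives agree since $f_\nu$ is smooth) gives
$$
\frac{f_\nu(a)-f_\nu(a-h)}{h}\;\leq\; f_\nu'(a)\;\leq\;\frac{f_\nu(a+h)-f_\nu(a)}{h}
$$
for every $h>0$ small enough that $a\pm h$ lies in the interval.

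Next, by uniform convergence $f_\nu\to f$, I can pass to the limit in $\nu$ in the two outer quotients, which are just evaluations of $f_\nu$ at three fixed points. This yields
$$
\frac{f(a)-f(a-h)}{h}\;\leq\;\liminf_\nu f_\nu'(a)\;\leq\;\limsup_\nu f_\nu'(a)\;\leq\;\frac{f(a+h)-f(a)}{h}.
$$
Now I let $h\to 0^{+}$. By hypothesis $f'(a)$ exists, so both outer difference quotients converge to $f'(a)$, forcing $f_\nu'(a)\to f'(a)$. Since a convex function on an open interval is differentiable outside a set of Lebesgue measure zero, the pointwise a.e.\ convergence follows immediately.

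For the dominated convergence statement on a compact subinterval $J$, I would choose a slightly larger compact $J'\supset J$ contained in the interval of definition. The functions $f_\nu$ are uniformly bounded on $J'$ (by uniform convergence), and a convex function bounded on $J'$ is Lipschitz on $J$ with a constant controlled by the oscillation on $J'$ and the distance from $J$ to $\partial J'$. Hence $\{f_\nu'\}$ is uniformly bounded on $J$, which together with the a.e.\ convergence gives dominated convergence on $J$. The only step that requires any care is the quantitative control of the Lipschitz constant on $J$, and this is a standard consequence of the three-slope inequality for convex functions; no real obstacle is expected.
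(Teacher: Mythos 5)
Your proof is correct. The paper states this lemma explicitly without proof (``two elementary lemmas that we state without proof''), so there is no argument of the author's to compare against; your route --- the three-slope inequality for convex functions, passing to the limit in the outer difference quotients by uniform (indeed pointwise) convergence, then letting $h\to 0^+$ using the existence of $f'(a)$, and finally the standard local Lipschitz bound for convex functions to get a constant dominating function on compact subintervals --- is exactly the standard argument the author presumably had in mind. The only pedantic caveat, which concerns the statement rather than your proof, is that the compact subinterval must lie in the interior of the domain for the uniform Lipschitz bound to hold (a convex function's derivative can blow up at an endpoint); your construction of the slightly larger $J'$ implicitly assumes this, which is the intended reading and is what is used in the application.
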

Another technical problem that arises is that we are dealing with
certain orthogonal projections on the manifold $X$, where the weight
depends on $t$. The next lemma gives us control of how these
projections change.
\begin{lma} Let $\alpha_t$ be forms on $X$ with coefficients depending
  on $t$ in $\Omega$. Assume that $\alpha_t$ is Lipschitz with respect to $t$ as
  a map from $\Omega$ to $L^2(X)$. Let $\pi^t$ be the orthogonal
  projection on $\dbar$-closed forms with respect to the metric $\phi_t$ and the
  fixed K\"ahler metric $\omega$. Then $\pi^t(\alpha_t)$ is also
  Lipschitz, with a Lipschitz constant depending only on that of
  $\alpha$ and the Lipschitz constant of $\phi_t$ with respect to $t$.
\end{lma}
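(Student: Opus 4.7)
The plan is to exploit the variational characterization of the orthogonal projection together with the elementary observation that the subspace $V:=\ker\dbar$ of $L^2(X)$ is fixed, independent of $t$; only the inner product that defines the projection varies with $t$, and it does so Lipschitzly because $\phi_t$ is Lipschitz and bounded.

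Write $u_t:=\pi^t(\alpha_t)$. By definition $u_t\in V$ and
$$
\int_X \langle \alpha_t-u_t,w\rangle\, e^{-\phi_t}=0 \quad \text{for every } w\in V.
$$
Writing the analogous identity at a nearby $s$, subtracting, and rearranging gives
$$
\int_X\langle u_t-u_s,w\rangle\,e^{-\phi_t} = \int_X\langle \alpha_t-\alpha_s,w\rangle\,e^{-\phi_t} + \int_X\langle \alpha_s-u_s,w\rangle\,\bigl(e^{-\phi_t}-e^{-\phi_s}\bigr).
$$
The key step is to test with $w:=u_t-u_s$, which lies in $V$. Applying Cauchy--Schwarz to both terms on the right and using the pointwise bound $|e^{-\phi_t}-e^{-\phi_s}|\leq C|t-s|\,e^{-\phi_t}$ (a consequence of the Lipschitz and boundedness hypotheses on $\phi$) yields
$$
\|u_t-u_s\|_t^2 \leq \|\alpha_t-\alpha_s\|_t\,\|u_t-u_s\|_t + C|t-s|\,\|\alpha_s-u_s\|_t\,\|u_t-u_s\|_t.
$$

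Dividing through by $\|u_t-u_s\|_t$ produces a Lipschitz bound for $t\mapsto u_t$ in the varying norm $\|\cdot\|_t$. Since all the weights $e^{-\phi_t}$ are mutually equivalent (again because $\phi_t$ is bounded), this converts immediately into the desired Lipschitz estimate in the fixed $L^2(X)$ norm. The factor $\|\alpha_s-u_s\|_t$ is locally bounded: $u_s$ is the $\|\cdot\|_s$-projection of $\alpha_s$, so $\|\alpha_s-u_s\|_s\leq \|\alpha_s\|_s$, and $\|\alpha_s\|_s$ is locally bounded because $\alpha$ is Lipschitz into $L^2$. Tracking constants shows that the resulting Lipschitz constant depends only on those of $\alpha$ and of $\phi_t$, as claimed.

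There is no serious obstacle here; the only technical point is the careful bookkeeping between the $t$- and $s$-weighted norms, which is handled uniformly thanks to the boundedness and Lipschitz assumptions on $\phi$.
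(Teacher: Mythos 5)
The paper states this lemma without proof (it is introduced as one of "two elementary lemmas that we state without proof"), so there is nothing to compare against; judged on its own, your argument is correct and is surely the intended one. The variational characterization of $\pi^t$ on the $t$-independent closed subspace $V=\ker\dbar$, the subtraction of the two identities, and the test with $w=u_t-u_s\in V$ give exactly the estimate $\|u_t-u_s\|_t\leq\|\alpha_t-\alpha_s\|_t+C|t-s|\,\|\alpha_s-u_s\|_t$, and the pointwise bound $|e^{-\phi_t}-e^{-\phi_s}|\leq C|t-s|e^{-\phi_t}$ is legitimate here because the metrics are uniformly bounded and uniformly Lipschitz in $t$ (section 2.2 of the paper). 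The only bookkeeping caveat is that your final constant also depends on the uniform bound $|\phi_t-\psi|\leq C$ (through the equivalence of the norms $\|\cdot\|_t$) and on a local bound for $\|\alpha_s\|$, not literally only on the two Lipschitz constants; both quantities are under control in the paper's application, so this is a harmless imprecision shared by the statement of the lemma itself.
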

Note that in our  case, when $\phi$ is independent of the imaginary
part of $t$, we 
have control of the Lipscitz constant with respect to $t$ of $\phi_t$ , and also
by the first 
lemma uniform control of the Lipschitz constant of $\phi^\nu_t$, since
the derivatives are increasing.

\bigskip

It follows from the curvature formula that 
$$
a_\nu:=\int_{X\times\Omega'}i\ddbar\phi^\nu\wedge\hat u\wedge\overline{\hat
  u}e^{-\phi^\nu}
$$
goes to zero if $\Omega'$ is a relatively compact subdomain of
$\Omega$. Shrinking $\Omega$ slightly we assume that this actually
holds with $\Omega'=\Omega$. By the Cauchy inequality
$$
\int_{X\times\Omega}i\ddbar\phi^\nu\wedge\hat
u\wedge\overline{W}e^{-\phi^\nu}\leq
a_\nu\int_{X\times\Omega}i\ddbar\phi^\nu\wedge W\wedge\bar W e^{-\phi^\nu}
$$
if $W$ is any $(n,0)$-form. Choose $W$ to contain no
differential $dt$, so that it is an $(n,0)$-form on $X$ with
coefficients depending on $t$. Then
$$
\int_{X\times\Omega}i\ddbar\phi^\nu\wedge W\wedge\bar W e^{-\phi^\nu}=
\int_{X\times\Omega}i\ddbar_t\phi^\nu\wedge
W\wedge\bar W e^{-\phi^\nu}
$$
We now assume that $W$ has compact support. The one variable
Hörmander inequality with respect to $t$ then shows that the last integral is
dominated by
\be
\int_{X\times\Omega}|\partial^{\phi^\nu}_t W|^2e^{-\phi^\nu}.
\ee
From now we assume that $W$ is Lipschitz with respect to $t$ as a map from
$\Omega$ into $L^2(X)$. Then (4.1) is uniformly bounded, so

$$
\int_{X\times\Omega}idt\wedge d\bar
t\wedge\mu^\nu\wedge\overline{W}e^{-\phi^\nu} 
$$
goes to zero, where $\mu^\nu$ is defined as in (3.6) with $\phi$ replaced by
$\phi^\nu$. By Lemma 4.2
$$
\int_{X\times\Omega}idt\wedge d\bar
t\wedge\mu^\nu\wedge\overline{\pi_\perp W}e^{-\phi^\nu} 
$$
also goes to zero. Therefore
$$
\int_{X\times\Omega}idt\wedge d\bar
t\wedge\pi_\perp(\mu^\nu)\wedge\overline{W}e^{-\phi^\nu}. 
$$
goes to zero. Now recall that
$\pi_\perp(\mu^\nu)=\dt (\partial v_t^\nu/\partial\bar t)$ and integrate
by parts. This gives that
$$
\int_{X\times\Omega}idt\wedge d\bar
t\wedge \frac{\partial v_t^\nu}{\partial \bar
  t}\wedge\overline{\dbar_XW}e^{-\phi^\nu}  
$$ 
also vanishes as $\nu$ tends to infinity. 

\bigskip

Next we let $\alpha$ be a form of bidegree $(n,1)$ on $X\times\Omega$
that does not contain any differential $dt$. We assume it is Lipschitz
with respect to $t$ and decompose it into one part, $\dbar_X W$,  which is
$\dbar_X$-exact and 
one which is orthogonal to $\dbar_X$-exact forms. This amounts of
course to making this orthogonal decomposition for each $t$
separately, and by Lemma 4.2 each term in the decomposition is still
Lipschitz in $t$, uniformly in $\nu$. Since $v^\nu_t\wedge\omega$ is
$\dbar_X$-closed 
by construction, this holds also for $\partial v^\nu/\partial \bar
t$. By our cohomological assumption, it is also $\dbar$-exact,
and we get that
$$
\int_{X\times\Omega}idt\wedge d\bar
t\wedge \frac{\partial v_t^\nu}{\partial \bar
  t}\wedge\overline\alpha e^{-\phi^\nu}=  \int_{X\times\Omega}idt\wedge d\bar
t\wedge \frac{\partial v_t^\nu}{\partial \bar
  t}\wedge\overline{\dbar_XW}e^{-\phi^\nu}.  
$$ 
Hence

$$
\int_{X\times\Omega}dt\wedge
v_t\wedge\overline{\partial^{\phi^\nu}_t\alpha}e^{-\phi^\nu}
$$
goes to zero. 
By Lemma 4.1 we may pass to the limit here and finally get 
that
\be
\int_{X\times\Omega} dt\wedge
v_t\wedge\overline{\partial^{\phi}_t\alpha}e^{-\phi} =0,  
\ee
under the sole assumption that $\alpha$ is of compact support, and
Lipschitz in $t$. This is almost the distributional formulation of
$\dbar_t v=0$, except that $\phi$ is not smooth. But, replacing
$\alpha$ by $e^{\phi-\psi}\alpha$, where $\psi$ is another metric on
$L$, we see that  if (4.2) holds for some $\phi$, Lipschitz in $t$, it
holds for any such metric. Therefore we can replace $\phi$ in (4.2) by
some other smooth metric. It follows that $v_t$ is holomorphic in $t$
and therefore, since we already know it is holomorphic on $X$,
holomorphic on $X\times\Omega$. This completes the proof. 
\bigskip
\section{ The Bando-Mabuchi theorem.}

For $\phi_0$ and $\phi_1$, two metrics on a line bundle $L$ over $X$,
we consider their relative energy
$$
\E(\phi_0,\phi_1).
$$
This is well defined if $\phi_j$ are bounded with
$i\ddbar\phi_j\geq 0$. It has the fundamental properties that if
$\phi_t$ is smooth in $t$ for $t$ in $\Omega$, then
$$
\frac{\partial}{\partial t}\E(\phi_t,\phi_1)=\int_X \dof
(i\ddbar\phi_t)^n/\text{Vol}(L)
$$
and
$$
i\ddbar_t\E(\phi_t,\phi_1)=p_*((i\ddbar_{X,t}\phi)^{n+1})/\text{Vol}(L)=idt\wedge
d\bar t\int_X
c(\phi_t)(i\ddbar_X \phi_t)^n/\text{Vol}(L),
$$
where $p$ is the projection map from $X\times\Omega$ to $\Omega$.
Here Vol($L$) is the normalizing factor
$$
\text{Vol}(L)=\int_X(i\ddbar_X \phi)^n,
$$
chosen so that the derivative of $\E$ becomes 1 if $\phi_t=\phi+t$. 
If the family is only bounded, these formulas hold in the sense of
distributions. In particular, if $\phi$ solves the homogenuous
Monge-Amp\`ere equation, so that $(i\ddbar_{X,t}\phi)^{n+1}=0$ or
equivalently $c(\phi)=0$, then $\E(\phi_t,\phi_1)$ is harmonic in
$t$. Hence this function is linear along geodesics.

\bigskip

Let now 
$$
\G(t)=\F(t)-\E(\phi_t,\psi)
$$
where $\psi$ is arbitrary. Then $\phi_0$ solves the K\"ahler-Einstein
equation if and only if $\G'(0)=0$  for any smooth curve $\phi_t$. If
$\phi_0$ and $\phi_1$ are two K\"ahler-Einstein metrics we connect them
by a geodesic $\phi_t$ (a continuous geodesic will be enough). Now
$\phi_t$ depends only on the real part of $t$ so $\G$ is convex. We
claim that since
both end points are K\"ahler-Einstein metrics, 0 and 1 are stationary
points for $\G$. This would be immediate if the geodesic were smooth,
but it is not hard to see that it also holds if the geodesic is only
bounded, with boundary behaviour as described in section 2.2.  The
function $\F$ is convex, hence has onesided derivatives at the
endpoints, and using the convexity of $\phi$ with respect to $t$ one
sees that they equal
$$
\int \dof e^{-\phi}/\int e^{-\phi}
$$
(where $\dof$ now stands for the onesided derivatives). The function
$\E(\phi_t,\psi)$ is linear so its distributional derivative
$$
\int_X \dof
(i\ddbar\phi_t)^n/\text{Vol}(L)
$$
is constant and simple convergence theorems for the Monge-Amp\`ere
operator show that it is equal to its values at the endpoints. Hence
both end points are critical points for $\G$ and the convexity implies
that $\G$ is constant so $\F$ is  linear.

By Theorem
1.2 $\ddbar\phi_t$ are related via a holomorphic family of
automorphisms. In particular $\ddbar\phi_0$ and $\ddbar\phi_1$ are
related via an automorphism which is homotopic to the identity, which
is the content of the Bando-Mabuchi theorem.

\section{A generalized Bando-Mabuchi theorem}

\subsection{A variant of Theorem 1.2 for unbounded metrics}
One might ask if Theorem 1.2 is valid under even more general
assumptions. A minimal requirement is of course that $\F$ be finite,
or in other words that $e^{-\phi_t}$ be integrable. For all we know
Theorem 1.2 might be true in this generality, but here we
will limit ourselves to  the following situation:

\bigskip
\noindent
 Let $t\rightarrow \tau_t$ be a curve of singular metrics
  on $L=-K_X$ that can be written
$$
\tau_t=\phi_t +\psi
$$
where $\psi$ is a metric on an $\R$-line bundle $S$ and $\phi_t$ is a
curve of metrics on $-(K_X+S)$ such that:

\medskip

(i) $ \phi_t$ is bounded  and
only depends on $\Re t$.

\medskip

(ii) $e^{-\psi}$ is integrable and  $\psi$ does not depend on $t$

and

\medskip

(iii) 
$i\ddbar_{t,X}(\tau_t)\geq 0$. 

\medskip

\begin{thm} Assume that $-K_X\geq 0$ and that $H^{0,1}(X)=0$. Let
  $\tau_t=\phi_t+\psi$ be a 
  curve of metrics on $-K_X$ 
  satisfying  (i)-(iii). Assume that
$$
\F(t)=-\log\int_X e^{-\tau_t}
$$
is affine. Then there is a holomorphic vector field $V$ on $X$ with
flow $F_t$ such that
$$
F_t^*(\ddbar\tau_t)=\ddbar\tau_0.
$$
\end{thm}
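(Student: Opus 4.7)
The plan is to adapt the proof of Theorem 1.2 (Sections 3--4) by keeping the decomposition $\tau_t=\phi_t+\psi$ throughout. The decisive simplification is that $\dot\tau_t=\dot\phi_t$, which is uniformly bounded because $\phi_t$ is a bounded curve (Section 2.2) and $\psi$ is independent of $t$; thus the time-derivative that drives the curvature formula is insensitive to the singularities of $\psi$, and these enter the estimates only through the fixed integrable weight $e^{-\psi}$.

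First I would approximate $\psi$ by smooth $\psi^\nu\searrow\psi$ with $i\ddbar\psi^\nu\geq-\epsilon_\nu\omega$ (Section 2.3), and $\phi_t$ by smooth $\phi_t^\nu$, setting $\tau_t^\nu:=\phi_t^\nu+\psi^\nu$, so that $i\ddbar_{t,X}\tau_t^\nu\geq-\delta_\nu\omega$ with $\delta_\nu\to 0$. Monotone convergence gives $e^{-\psi^\nu}\uparrow e^{-\psi}$, hence $\F^\nu(t):=-\log\int_X e^{-\tau_t^\nu}\to\F(t)$ and $i\ddbar\F^\nu\to 0$ weakly, while Lemma 4.1 keeps $\dot\phi_t^\nu$ uniformly bounded. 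Applying Theorem 3.1 with weight $\tau_t^\nu$ and the trivializing section $u\equiv 1$ of $K_X+L$ produces $v_t^\nu$ solving $\partial^{\tau_t^\nu}v_t^\nu=\pi_\perp(\dot\phi_t^\nu u)$ with $\dbar_X v_t^\nu\wedge\omega=0$. Using the closed-range argument of Remark 1 together with the uniform bounds on $|\dot\phi_t^\nu|$ and on $\int_X e^{-\tau_t^\nu}\leq\int_X e^{-\tau_t}$, a uniform $L^2(e^{-\tau_t^\nu})$ bound on $v_t^\nu$ follows; since $e^{-\tau_t^\nu}\leq e^{-\tau_t}$, this becomes a uniform bound in $L^2(X\times\Omega,e^{-\tau})$, from which I extract a weak limit $v_t$.

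Vanishing of $i\ddbar\F$ forces both non-negative terms on the right of (3.2) to vanish in the limit, so $\dbar_X v_t=0$ and $i\ddbar\tau\wedge\hat u=0$ weakly. Since $u\equiv 1$ is nowhere zero, $-v_t=V_t\rfloor u$ defines a vector field $V_t$ holomorphic on $X$ for each fixed $t$, and the computation (3.3)--(3.4) transfers verbatim, giving $V_t\rfloor i\ddbar_X\tau_t=i\dbar\dot\phi_t$ and $L_{V_t}\ddbar_X\tau_t=\partial_t\ddbar_X\tau_t$. To show $V_t$ depends holomorphically on $t$, I would replay the distributional argument of Section 4: differentiate $\partial^{\tau_t^\nu}v_t^\nu=\dot\phi_t^\nu u+h_t^\nu$ in $\bar t$, use the first term of the curvature formula to send the $\mu^\nu$-piece (the $\tau$-analogue of (3.6), which involves only $\partial^2\phi_t/\partial t\partial\bar t$ and is therefore bounded since $\psi$ drops out) to zero against compactly supported Lipschitz test forms $\alpha$, apply Lemma 4.2 for the varying orthogonal projection, and pass to the limit by Lemma 4.1 to obtain the analogue of (4.2). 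The weight-change step at the end of Section 4 still applies because multiplying a test form by $e^{\tau-\tau_{\rm sm}}$ for smooth $\tau_{\rm sm}$ introduces only a bounded factor (the $e^\psi$ contribution vanishes on the polar set of $\psi$), so the test-form class is preserved. Combining gives $\V:=V_t-\partial/\partial t$ holomorphic on $X\times\Omega$ with $L_\V(\ddbar_X\tau_t)=0$, hence $F_t^*(\ddbar\tau_t)=\ddbar\tau_0$.

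The main technical obstacle is the uniformity of the closed-range constant in Remark 1 as $\psi^\nu$ degenerates, since that constant is a priori uniform only among mutually bounded weights, which fails as $\psi$ is singular. The saving feature is that $e^{-\psi}$ is integrable (klt), so the multiplier ideal of $\psi$ is trivial and the cohomological hypothesis $H^{n,1}(X,-K_X)=H^{0,1}(X)=0$ still governs solvability with the singular weight; combined with the time-independence of $\psi$ and the uniform comparability of the $\phi_t^\nu$, this should yield the required uniform $L^2$ estimates, most cleanly via a direct H\"ormander inequality with singular weight. This is the technical heart of the proof that the plan above implicitly defers to a separate $L^2$-theoretic lemma.
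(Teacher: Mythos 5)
Your overall strategy coincides with the paper's: Theorem 6.1 is proved by rerunning the argument of Sections 3--4, and the only genuinely new ingredient is a uniform $L^2$ estimate for the equations $\partial^{\tau^\nu_t}v^\nu_t=\pi_\perp(\dot\tau^\nu_t u)$ that does not degenerate as the smooth approximants $\tau^\nu_t$ collapse onto the singular metric $\tau_t$. But that ingredient is exactly what you defer to ``a separate $L^2$-theoretic lemma'', and the route you sketch for it does not work: there is no strict positivity of curvature available (only $i\ddbar\tau^\nu_t\geq-\epsilon_\nu\omega$), so a direct H\"ormander inequality with the singular weight gives nothing, and the closed-range constant of Remark 1 is a priori uniform only over mutually bounded weights, which the $\tau^\nu_t$ are not as $\nu\to\infty$, as you yourself observe. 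The paper closes this gap with Lemma 6.2: for any $\xi\leq\xi_0$ with $i\ddbar\xi\geq 0$, $\xi_0$ smooth and $I=\int_Xe^{\xi_0-\xi}<\infty$, one can solve $\dbar u=f$ with $\int_X|u|^2e^{-\xi}\leq A$ whenever $\int_X|f|^2e^{-\xi}\leq 1$, with $A$ depending only on $I$ and $\xi_0$. Its proof is a gluing argument: solve globally with the smooth weight $\xi_0$ (closed range), solve locally on coordinate balls with the weight $\xi$ (local, hence weight-independent, constants), observe that the difference of the two solutions is holomorphic so its sup on a smaller ball is controlled by its $L^2(e^{-\xi_0})$ norm, and convert that sup bound into an $L^2(e^{-\xi})$ bound using precisely the hypothesis $I<\infty$. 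Since $\tau^\nu_t$ decreases to $\tau_t$ and $e^{-\tau_t}$ is integrable, $I$ is bounded uniformly in $\nu$ and $t$, which is what makes the rest of your outline go through. Without this lemma, or an equivalent substitute, the proposal is incomplete at its acknowledged technical heart.

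A secondary point: you approximate $\psi$ and $\phi_t$ separately, but hypothesis (iii) controls only $i\ddbar_{t,X}\tau_t$, not $i\ddbar\psi$; the $\R$-bundle $S$ need not be semipositive, so a decreasing approximation of $\psi$ by smooth metrics with $i\ddbar\psi^\nu\geq-\epsilon_\nu\omega$ is not available in general. The paper instead approximates $\tau_t$ as a whole, using $-K_X\geq 0$ and the scheme of Section 2.3. The remainder of your outline (boundedness of $\dot\tau_t=\dot\phi_t$, the definition of $V_t$ from the nonvanishing section $u\equiv 1$, and the Lipschitz/weak-limit argument for holomorphicity in $t$) is consistent with what the paper does.
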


The proof of this theorem is almost the same as the proof of Theorem
1.2. The main thing to be checked is that for $\tau=\tau^\nu$ a
sequence of smooth metrics decreasing to $\tau$ we can still solve the
equations 
$$
\partial^{\tau_t} v_t=\pi_\perp (\dot{\tau_t} u)
$$
with an $L^2$ -estimate independent of $t$ and $\nu$. 
\begin{lma}
Let $L$ be a holomorphic line bundle over $X$ with a metric $\xi$
satisfying $i\ddbar\xi\geq 0$. Let $\xi_0$ be a smooth metric on $L$
with $\xi\leq \xi_0$, and assume
$$
I:=\int_X e^{\xi_0-\xi}<\infty.
$$
Then there is a constant $A$, only depending on $I$ and $\xi_0$ (
not on $\xi$!) such that if $f$ is a $\dbar$-exact $L$ valued
$(n,1)$-form with 
$$
\int |f|^2 e^{-\xi}\leq 1
$$
there is a solution $u$ to $\dbar u=f$ with 
$$
\int_X |u|^2 e^{-\xi}\leq A.
$$
(The integrals are understood to be taken with respect to some
arbitrary smooth volume form.)
\end{lma}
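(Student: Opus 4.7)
The strategy is to reduce to the smooth case by approximation (Section~2.3) and then use the integrability $I<\infty$ as a substitute for strict positivity of $i\ddbar\xi$ in an $L^2$--inequality.

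Step one is the standard approximation: write $\xi$ as the decreasing limit of smooth metrics $\xi^\nu$ with $i\ddbar\xi^\nu\geq -\epsilon_\nu\omega$, $\epsilon_\nu\to 0$, uniformly bounded above by some $\xi_0 + C_0$. Since $\xi\leq\xi^\nu$, monotone convergence gives $\int_X e^{\xi_0-\xi^\nu}\,dV\leq I$ for every $\nu$.

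The heart of the argument is to produce, for each $\nu$, a solution of $\dbar u^\nu=f$ with $\int_X |u^\nu|^2 e^{-\xi^\nu}\,dV\leq A$ for a constant $A=A(I,\xi_0)$ independent of $\nu$. The bare H\"ormander estimate yields a constant of order $1/\epsilon_\nu\to\infty$, which is useless; instead I would apply a twisted $L^2$--estimate in the spirit of Donnelly--Fefferman, Berndtsson, or McNeal--Varolin, using an auxiliary weight built from $e^{\xi_0-\xi^\nu}$ itself, for instance $\log(1+\delta\, e^{\xi_0-\xi^\nu})$ with a suitably chosen $\delta>0$. Such a twist contributes extra positive curvature precisely where $\xi^\nu$ is most negative---which is exactly where the naked H\"ormander argument is weakest---while its contribution to the norm is controlled by $I$ and by the $C^2$ data of $\xi_0$. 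This is the delicate step.

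Once the uniform bound is in hand, extract a weak limit $u^\nu\rightharpoonup u$ in the smooth Hilbert space $L^2(X,e^{-\xi_0-C_0})$; since the operator $\dbar$ is closed, the identity $\dbar u = f$ persists in the distributional limit. For every fixed $\mu$ the weak convergence also holds in the equivalent space $L^2(e^{-\xi^\mu})$, so lower semicontinuity of the norm together with the monotonicity $e^{-\xi^\mu}\leq e^{-\xi^\nu}$ for $\nu\geq\mu$ gives
\[
\int_X |u|^2 e^{-\xi^\mu}\,dV\;\leq\;\liminf_\nu\int_X |u^\nu|^2 e^{-\xi^\mu}\,dV\;\leq\;\liminf_\nu\int_X |u^\nu|^2 e^{-\xi^\nu}\,dV\;\leq\;A.
\]
Letting $\mu\to\infty$ and invoking monotone convergence for $e^{-\xi^\mu}\uparrow e^{-\xi}$ upgrades this to $\int_X |u|^2 e^{-\xi}\,dV\leq A$, as required. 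The main obstacle is the middle step: engineering a twist whose positive-curvature gain absorbs the H\"ormander loss $1/\epsilon_\nu$ uniformly in $\nu$, while its weight-perturbation remains controlled by $I$ and $\xi_0$ alone.
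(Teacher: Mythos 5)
Your reduction to smooth approximants and the final weak-limit/monotone-convergence step are fine, but the heart of the argument --- the uniform estimate for $\dbar u^\nu=f$ in the weight $e^{-\xi^\nu}$ --- is exactly the part you leave unproved, and as proposed it does not go through. A Donnelly--Fefferman/McNeal--Varolin twist needs an auxiliary function with a favourable sign on its complex Hessian; your candidate $\log(1+\delta\,e^{\xi_0-\xi^\nu})$ is built from $\xi_0-\xi^\nu$, whose Hessian $i\ddbar\xi_0-i\ddbar\xi^\nu$ is only bounded \emph{above} (by $i\ddbar\xi_0+\epsilon_\nu\omega$) and has no lower bound whatsoever, since $i\ddbar\xi^\nu$ can be arbitrarily large where $\xi$ is singular. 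So the twist gains nothing where you need it, and, just as seriously, your sketch never explains the mechanism by which the quantity $I$ actually enters the final constant --- yet $I<\infty$ is the only hypothesis that distinguishes this lemma from a false statement. A purely curvature-based argument cannot work here because $i\ddbar\xi\geq 0$ carries no strict positivity anywhere.

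The paper's proof is elementary and avoids both approximation and twisting. One solves $\dbar u=f$ globally in the \emph{smooth} weight $e^{-\xi_0}$ (closed range for smooth metrics), with $\int|u|^2e^{-\xi_0}\leq C$, using $\int|f|^2e^{-\xi_0}\leq\int|f|^2e^{-\xi}\leq 1$. Separately, on coordinate balls $B_j$ with $B_j/2$ covering $X$, one solves $\dbar u_j=f$ with $\int_{B_j}|u_j|^2e^{-\xi}\leq C_1$ directly in the singular weight: the local representative of $\xi$ is plurisubharmonic, so H\"ormander's estimate on the pseudoconvex ball applies with no loss and no smoothing. The difference $h_j=u-u_j$ is holomorphic on $B_j$ and bounded in $L^2(e^{-\xi_0})$, hence by the mean value inequality $\sup_{B_j/2}|h_j|^2e^{-\xi_0}\leq C_3$; this is precisely where $I$ enters, via
$$
\int_{B_j/2}|h_j|^2e^{-\xi}=\int_{B_j/2}|h_j|^2e^{-\xi_0}\,e^{\xi_0-\xi}\leq C_3\, I,
$$
so that $u=u_j+h_j$ satisfies $\int_{B_j/2}|u|^2e^{-\xi}\leq C_4 I$, and summing over $j$ gives the lemma. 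If you want to salvage your route, you would need to replace the twisting step by some argument of this local-to-global type; as written, the proposal has a genuine gap at its central step.
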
  
\begin{proof}
The assumptions imply that
$$
\int |f|^2 e^{-\xi_0}\leq 1.
$$
Since $\dbar$ has closed range for $L^2$-norms defined by smooth
metrics, we can solve $\dbar u=f$ with
$$
\int |u|^2 e^{-\xi_0}\leq C
$$
for some constant depending only on $X$ and $\xi_0$. Choose a
collection of coordinate balls $B_j$ such that $B_j/2$ cover $X$. In
each $B_j$ solve $\dbar u_j=f$ with
$$
\int_{B_j} |u_j|^2 e^{-\xi}\leq C_1\int_{B_j} |f|^2 e^{-\xi}\leq C_1,
$$
$C_1$ only depending on the size of the balls. Then $h_j:=u-u_j$ is
holomorphic on $B_j$ and 
$$
\int_{B_j}|h_j|^2 e^{-\xi_0}\leq C_2,
$$
so
$$
\sup_{B_j/2}|h_j|^2 e^{-\xi_0}\leq C_3.
$$
Hence
$$
\int_{B_j/2}|h_j|^2 e^{-\xi}\leq C_3 I
$$
and therefore
$$
\int_{B_j/2}|u|^2 e^{-\xi}\leq C_4 I.
$$
Summing up we get the lemma.
\end{proof}

By the discussion in section 2, the assumption that $-K_X\geq 0$
implies that we can write $\tau_t$ as a limit of a decreasing sequence
of smooth metrics $\tau_t^\nu$ with
$$
i\ddbar\tau_t^\nu\geq -\epsilon_\nu \omega
$$
where $\epsilon_\nu$ tends to zero. 
Applying the lemma  to $\xi=\tau^\nu_t$ and $\xi_0$ some arbitrary smooth
metric we see that we have uniform estimates for solutions of the
$\dbar$-equation, independent of $\nu$ and $t$. By remark 2, section
3, the same holds for the adjoint operator, which means that we can
construct $(n-1,0)$-forms $v_t^\nu$ just as in section 3, and the
proof of Theorem 6.1 then continues as in section 3. 

\bigskip

\subsection{Yet another version}
We also briefly describe yet another situation where the same
conclusion as in Theorem 6.1 can be drawn even though we do not assume
that $-K_X\geq 0$. The assumptions are very particular, and it is not
at all clear that they are optimal, but they are chosen to fit with
the properties of desingularisations of certain singular varieties. We
then assume instead that $-K_X$ can be 
decomposed
$$
-K_X = -(K_X +S) +S
$$
where $S$ is the $\R$-line bundle corresponding to a klt -divisor
$\Delta\geq 0$ and
we assume $-(K_X+S)\geq 0$. We moreover assume that the underlying
variety of $\Delta$ is a union of
smooth hypersurfaces with simple normal crossings. We then look at curves 
$$
\tau_t=\phi_t +\psi
$$
where $i\ddbar_{t, X}\phi_t\geq 0$ and $\psi$ is a fixed  metric on $S$
satisfying $i\ddbar\psi=[\Delta]$. We claim that the conclusion of
Theorem 6.1 holds in 
this situation as well. The difference as compared to our previous
case is that we do not assume that $\tau_t$ can be approximated by a
decreasing sequence of metrics with almost positive curvature.  For
the proof we approximate $\phi_t$ by a 
decreasing sequence of smooth metrics $\phi^\nu$ satisfying
$$
i\ddbar\phi_t^\nu\geq -\epsilon_\nu \omega.
$$
As for $\psi$ we approximate it following the scheme at the end of
section 2 by a sequence satisfying
$$
i\ddbar\psi^\nu\geq -C\omega
$$
and 
$$
i\ddbar\psi^\nu\geq -\epsilon_\nu \omega
$$
outside of any neighbourhood of $\Delta$. Then let
$\tau^\nu_t=\phi^\nu_t+\psi^\nu$. Now consider the curvature
formula (3.2)
\be
\langle\Theta^\nu u_t,u_t\rangle_t= p_*(c_n i\ddbar\tau^\nu_t \wedge \hat
u\wedge\overline{\hat u} e^{-\tau^\nu_t})  
+\int_X \|\dbar v^\nu_t\|^2 e^{-\tau^\nu_t}idt\wedge d\bar t
\ee
We want to see that the second term in the right hand side tends to
zero given that the 
curvature $\Theta^\nu$ tends to zero, and the problem is that the
first term on the right hand side has a negative part. However,
$$
 p_*(c_n i\ddbar\tau^\nu_t \wedge \hat
u\wedge\overline{\hat u} e^{-\tau^\nu_t}) 
$$
can for any $t$ be estimated from below by 
\be
-\epsilon_\nu \|\hat u\|^2 -C\int_U |v^\nu_t|^2 e^{-\tau^\nu}
\ee
where $U$ is any small neighbourhood of $\Delta$ if we choose $\nu$
large. This means, first, that we still have at least a uniform upper
estimate on $\dbar v^\nu_t$. This, in turn gives by the technical lemma below
that the $L^2$-norm of $v^\nu_t$ over a small neighbourhood of
$\Delta$ must be small if the neighbourhood is small. Shrinking the
neighbourhood as $\nu$ grows we can then arrange things so that the
negative part in the right hand side goes to zero. Therefore the
$L^2$-norm of $\dbar v^\nu_t$ goes to zero after all, and after that
the proof proceeds as before. We collect this in the next theorem.
\begin{thm} Assume that $-(K_X+S)\geq 0$ and that $H^{0,1}(X)=0$. Let
  $\tau_t=\phi_t+\psi$ be a 
  curve of metrics on $-K_X$ 
where 

(i) $\phi_t$ are metrics on $-(K_X+S)$ with $i\ddbar\phi_t\geq 0$,

and 

(ii) $\psi$ is a metric on $S$ with $i\ddbar\psi=[\Delta]$, where
$\Delta$ is a klt divisor with simple normal crossings. 

Assume that
$$
\F(t)=-\log\int_X e^{-\tau_t}
$$
is affine. Then there is a holomorphic vector field $V$ on $X$ with
flow $F_t$ such that
$$
F_t^*(\ddbar\tau_t)=\ddbar\tau_0.
$$
\end{thm}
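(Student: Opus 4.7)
The strategy is to mirror Section~4 using a two-level approximation of $\tau_t$, and to control the negative curvature defect along $\Delta$ via the lower bound~(6.2). As in the discussion preceding the theorem, approximate $\phi_t$ by a decreasing sequence of smooth metrics $\phi_t^\nu$ on $-(K_X+S)$ with $i\ddbar\phi_t^\nu\geq -\epsilon_\nu\omega$, and $\psi$ by smooth metrics $\psi^\nu$ with $i\ddbar\psi^\nu\geq -C\omega$ globally and $i\ddbar\psi^\nu\geq -\epsilon_\nu\omega$ outside any fixed neighbourhood of $\Delta$. Set $\tau_t^\nu=\phi_t^\nu+\psi^\nu$, and for each $(t,\nu)$ solve
$$
\partial^{\tau_t^\nu}v_t^\nu=\pi_\perp\paren{\dot{\tau_t^\nu}\,u},
$$
with $u\equiv 1$ the canonical trivializing section of the trivial bundle $K_X+L$, $L=-K_X$.

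First I would secure a uniform $L^2$-bound on $v_t^\nu$, independent of $t$ and $\nu$. Since $-(K_X+S)$ is semipositive and $e^{-\psi}$ is in $L^1$, Lemma~6.2 applied with $\xi=\tau_t^\nu$ and a fixed smooth reference metric $\xi_0$ on $-K_X$ gives a uniform $L^2$-estimate for $\dbar$ on $L$-valued forms, hence by duality (Remark~1 of Section~3) also for its adjoint $\partial^{\tau_t^\nu}$. Uniformity in $t$ follows from the Lipschitz dependence of $\tau_t$ on $\Re t$, exactly as in Section~6.1. Since $\F$ is affine, the curvature $\langle\Theta^\nu u,u\rangle_t$ tends to zero weakly on $\Omega$, so the left hand side of~(6.1) vanishes in the limit.

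The crux is to show that the second term on the right of~(6.1), $\int_{X\times\Omega'}\|\dbar_X v_t^\nu\|^2 e^{-\tau_t^\nu}\,idt\wedge d\bar t$, also tends to zero. A first coarse application of the bound~(6.2) with any fixed $U$ yields a uniform upper bound for this quantity, so $\{\dbar_X v_t^\nu\}$ is bounded in $L^2(X\times\Omega')$. The technical lemma alluded to in the excerpt would then estimate $\int_U |v_t^\nu|^2 e^{-\tau_t^\nu}$ by the $L^2$-norm of $\dbar_X v_t^\nu$ times a constant tending to zero with the $e^{-\tau_t^\nu}$-volume of $U$; the simple normal crossings hypothesis localizes this to a weighted one-variable $\dbar$-estimate near each component of $\Delta$, and the klt condition provides the necessary uniform control of $\int_U e^{-\psi^\nu}$. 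Shrinking $U=U_\nu$ to $\Delta$ slowly enough then makes the negative term in~(6.2) negligible, forcing $\|\dbar_X v_t^\nu\|_{L^2}\to 0$.

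From here the argument closes as in Section~4: extract a weak $L^2$-limit $v_t$ with $\dbar_X v_t=0$ and $\partial^{\tau_t}v_t=\pi_\perp(\dot\tau_t\, u)$; define $V_t$ by $-v_t=V_t\rfloor u$, which is pointwise a holomorphic vector field on $X$; and deduce $\partial v_t/\partial\bar t=0$ distributionally by testing against compactly supported Lipschitz forms and using Lemmas~4.1 and~4.2 together with the H\"ormander-type estimate applied to $\tau_t^\nu$. The Lie derivative identity~(3.4) then says that $\ddbar\tau_t$ is invariant under the flow of $V_t$, yielding the conclusion. The main obstacle is the technical lemma of the previous paragraph: outside the klt locus the argument is a routine adaptation of Section~4, but the precise weighted $\dbar$-estimate near $\Delta$, transferring a global bound on $\dbar v_t^\nu$ into a locally small $L^2$-mass with respect to $e^{-\tau_t^\nu}$, is where the normal crossings and klt hypotheses are used in an essential way.
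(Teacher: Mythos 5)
Your proposal follows the paper's own proof essentially step for step: the same two-level approximation $\tau_t^\nu=\phi_t^\nu+\psi^\nu$ with the defect $-C\omega$ confined to shrinking neighbourhoods of $\Delta$, the same use of Lemma~6.2 for uniform $L^2$-estimates, the same lower bound~(6.2) on the first curvature term, the same technical lemma controlling $\int_U|v_t^\nu|^2e^{-\tau_t^\nu}$ for small $U$ (which the paper proves, as you anticipate, by a one-variable Cauchy--Pompeiu estimate in each normal-crossing direction, the klt exponents $\alpha_j<1$ making the weighted kernel integral finite), and the same conclusion via the Section~4 argument. The only point you leave as an acknowledged obstacle rather than an argument is precisely that lemma, but your description of how it should go matches the paper's Lemma~6.4.
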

We end this section with the technical lemma used above.
\begin{lma} The term
$$
\int_U |v^\nu_t|^2 e^{-\tau^\nu}
$$
in (6.2) can be made arbitrarily small if $U$ is a sufficiently small
neighbourhood of $\Delta$
\end{lma}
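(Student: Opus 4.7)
The plan is to exploit two uniform $L^2$-bounds on the sequence $v^\nu_t$ together with the klt integrability of $e^{-\tau}$. First I would verify the uniform bound $\int_X |v^\nu_t|^2 e^{-\tau^\nu} \le C$ by applying the preceding $L^2$-estimate lemma (Lemma 6.2) to $\xi = \tau^\nu$: the integrability hypothesis $\int_X e^{\xi_0 - \tau^\nu} < \infty$ is uniform in $\nu$ because the monotone decrease $\tau^\nu \searrow \tau$ gives $e^{-\tau^\nu} \le e^{-\tau}$, and $e^{-\tau}$ is integrable by the klt assumption on $\psi$. The right-hand side of the defining equation $\partial^{\tau^\nu} v^\nu_t = \pi_\perp(\dot{\tau}^\nu u)$ is uniformly bounded in $L^2$ since $\phi_t$ is uniformly Lipschitz in $t$ by Section 2.2 and hence so are its smooth approximations.

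Second, the uniform upper bound $\int_X \|\dbar v^\nu_t\|^2 e^{-\tau^\nu} \le M$ falls out of the curvature identity (6.1) itself: the left-hand side is the (vanishing) curvature pairing, while the first term on the right is bounded below by (6.2); since both the $\epsilon_\nu$-factor there and the integral $\int_U|v^\nu_t|^2 e^{-\tau^\nu}$ are controlled by the first uniform bound, we obtain a uniform upper bound on the second term of (6.1).

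The lemma itself is then a relatively direct consequence. For each fixed $\nu$, the function $|v^\nu_t|^2 e^{-\tau^\nu}$ is smooth (both $v^\nu_t$ and $\tau^\nu$ are smooth) and integrable on the compact manifold $X$. Hence $\int_U |v^\nu_t|^2 e^{-\tau^\nu} \to 0$ as the neighborhood $U$ of $\Delta$ shrinks to $\Delta$, by absolute continuity of the integral; so for each $\nu$ one may pick $U^\nu$ such that the integral is as small as desired.

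The main obstacle, and where the uniform bound on $\dbar v^\nu_t$ is decisive, is coordinating the shrinking rate with the approximation property $i\ddbar\psi^\nu \ge -\epsilon_\nu\omega$ on $X\setminus U^\nu$ (recall that the size of $\epsilon_\nu$ depends on the fixed neighborhood of $\Delta$ being excluded, so $U^\nu$ cannot shrink arbitrarily fast). To secure a compatible sequence $U^\nu\searrow\Delta$ one combines the uniform $L^2$-bound on $v^\nu_t$ with that on $\dbar v^\nu_t$: standard elliptic estimates for the combined $(\partial,\dbar)$-Laplacian on compact subsets of $X\setminus\Delta$, where $e^{-\tau^\nu}$ is uniformly comparable to a smooth weight, yield $L^2_{\loc}$-precompactness of $v^\nu_t$ away from $\Delta$. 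A Fatou argument on the monotone weights $e^{-\tau^\nu}\nearrow e^{-\tau}$ then shows that any weak-$*$ limit of $|v^\nu_t|^2 e^{-\tau^\nu}\,dV$ is absolutely continuous with respect to $e^{-\tau}\,dV$ and places no mass on $\Delta$, so the integral over a suitably chosen $U^\nu$ can indeed be forced to zero at a rate compatible with $\epsilon_\nu\to 0$.
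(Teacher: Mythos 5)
Your two preliminary uniform bounds are fine and agree with what the paper uses: $\int_X|v^\nu_t|^2e^{-\tau^\nu}\le C$ from the solvability estimate (Lemma 6.2, with $e^{-\tau^\nu}\le e^{-\tau}$ by monotonicity), and then a uniform bound on $\int_X\|\dbar v^\nu_t\|^2e^{-\tau^\nu}$ by feeding the first bound back into the curvature identity. The gap is in the final step. The statement to be proved is a \emph{tightness} assertion, uniform in $\nu$: for every $\eta>0$ there are $\delta$ and $N$ with $\int_{U_\delta}|v^\nu_t|^2e^{-\tau^\nu}\le\eta$ for all $\nu\ge N$. Your argument for this is that $L^2_{\loc}$-precompactness of $v^\nu_t$ away from $\Delta$ plus Fatou on the weights shows that ``any weak-$*$ limit of $|v^\nu_t|^2e^{-\tau^\nu}\,dV$ places no mass on $\Delta$.'' But that claim is essentially equivalent to the lemma and is not justified by anything you have written: precompactness on compact subsets of $X\setminus\Delta$ gives no information about concentration of mass \emph{at} $\Delta$, and Fatou applied to $e^{-\tau^\nu}\nearrow e^{-\tau}$ does not control the product $|v^\nu_t|^2e^{-\tau^\nu}$ because $v^\nu_t$ varies with $\nu$; a priori an $O(1)$ amount of weighted $L^2$ mass could escape into shrinking neighbourhoods of $\Delta$. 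Symptomatically, your proof never uses the klt condition $\alpha_j<1$ quantitatively, yet the conclusion is false without it (for $\alpha_j\ge 1$ the weight $e^{-\tau}$ is not even integrable and no such uniform smallness can hold).

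What is actually needed, and what the paper supplies, is a quantitative local estimate that converts the uniform bound on $\dbar v^\nu_t$ into uniform smallness of $v^\nu_t$ near $\Delta$. Working in a polydisk where $\Delta$ is a union of coordinate hyperplanes and the approximated weight is $\psi_\epsilon=\sum\alpha_j\log(|z_j|^2+\epsilon)$ with $0\le\alpha_j<1$, one represents a compactly supported $v$ by the one-variable Cauchy--Pompeiu formula $v(z_1,z')=\pi^{-1}\int v_{\bar\zeta_1}(\zeta_1,z')/(\zeta_1-z_1)$, applies Cauchy--Schwarz, multiplies by $(|z_1|^2+\epsilon)^{-\alpha_1}$ and integrates over $|z_1|\le\delta$ using the kernel estimate $\int_{|z_1|\le\delta}(|z_1|^2+\epsilon)^{-\alpha_1}|z_1-\zeta_1|^{-1}\le c_\delta(|\zeta_1|^2+\epsilon)^{-\alpha_1}$, where $c_\delta\to 0$ precisely because $\alpha_1<1$; iterating over the coordinates gives $\int_{\cup\{|z_j|\le\delta\}}|v|^2e^{-\psi_\epsilon}\le c_\delta\int(|v|^2+|\dbar v|^2)e^{-\psi_\epsilon}$ uniformly for $\delta\gg\epsilon$, i.e.\ uniformly in $\nu$ once $\nu$ is large relative to $\delta$. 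This is the mechanism by which the $\dbar v^\nu_t$ bound controls $v^\nu_t$ near the divisor; your compactness-plus-Fatou route does not replace it, so the proposal as it stands does not prove the lemma.
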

\begin{proof}
Covering $\Delta$ with a finite number of polydisks, in which the
divisor is a union of coordinate hyperplanes,  it is enough to prove
the following statement:

Let $P$ be the unit  polydisk in $\C^n$ and let $v$ be a compactly
supported function in $P$. Let 
$$
\psi_\epsilon=\sum \alpha_j\log( |z_j|^2 +\epsilon)
$$
where $0\leq \alpha_j<1$. Assume
$$
\int_P (|v|^2 +|\dbar v|^2)e^{-\psi}\leq 1.
$$
Then for $\delta>>\epsilon$
$$
\int_{\cup\{|z_j|\leq \delta\}} |v|^2 e^{-\psi_\epsilon} \leq c_\delta
$$
where $c_\delta$ tends to zero with $\delta$.

To prove this we first  estimate the integral over $ |z_1|\leq \delta$
 using the one 
variable Cauchy formula in the first variable
$$
v(z_1, z')= \pi^{-1}\int v_{\bar\zeta_1}(\zeta_1,z')/(\zeta_1 -z_1)
$$
which gives
$$
|v(z_1, z')|^2 \leq C \int |v_{\bar\zeta_1}(\zeta_1,z')|^2/|\zeta_1
-z_1|.
$$
Then multiply by $(|z_1|^2 +\epsilon)^{-\alpha_1}$ and integrate with
respect to $z_1$ over $|z_1|\leq \delta$. Use the estimate
$$
\int_{|z_1|\leq \delta} \frac{1}{(|z_1|^2
  +\epsilon)^{\alpha_1}|z_1-\zeta_1|}\leq  c_\delta
(|\zeta_1|^2+\epsilon)^{-\alpha_1},
$$
multiply by $\sum_2^n \alpha_j\log( |z_j|^2 +\epsilon)$ and integrate
with respect to $z'$. Repeating the same argument for $z_2, ..z_n$ and
summing up we get the required estimate.

\end{proof}
\subsection{A generalized Bando-Mabuchi theorem}
As pointed out to me by Robert Berman, Theorems 6.1 and 6.3  lead to  versions
of the Bando-Mabuchi theorem for 'twisted K\"ahler-Einstein
equations', \cite{Szekelyhidi}, \cite{2Berman}, and
\cite{2Donaldson}. Let $\theta$ be 
a positive $(1,1)$-current that can be written
$$
\theta=i\ddbar\psi 
$$
with $\psi$ a metric on a $\R$-line bundle $S$. The twisted
K\"ahler-Einstein equation is 
\be
\text{Ric}(\omega) =\omega +\theta,
\ee
for a K\"ahler metric $\omega$ in the class $c[-(K_X+ S)]$. Writing
$\omega=i\ddbar\phi$, where $\phi$ is a metric on the $\R$-line bundle 
 $F:=-(K_X+S)$, this is equivalent to
\be
(i\ddbar\phi)^n= e^{-(\phi+\psi)},
\ee
after adjusting constants. 

To be able to apply Theorems 6.1 and 6.2 we need to assume that $e^{-\psi}$ is
integrable. By this we mean that representatives with respect to a
local frame are integrable. When $\theta=[\Delta]$ is the current defined by a
divisor, it means that the divisor is klt.

Solutions $\phi$ of (6.2) are now critical points of the function
$$
\G_\psi(\phi):=-\log\int e^{-(\phi+\psi)} -\E(\phi, \chi)
$$
where $\chi$ is an arbitary metric on $F$. Here  $\psi$ is fixed and we
let the variable $\phi$ range over bounded metrics with
$i\ddbar\phi\geq 0$. If $\phi_0$ and $\phi_1$ are two critical points,
it follows from the discussion in section 2  that
we can connect them with a bounded  geodesic $\phi_t$. Since $\E$ is
affine along 
the geodesic it follows that 
$$
t\rightarrow -\log\int e^{-(\phi_t+\psi)} 
$$
is affine along the geodesic and we can apply Theorem 6.1. 
\begin{thm} Assume that $-K_X$ is semipositive and that
  $H^{0,1}(X)=0$. 
Assume that  $i\ddbar\psi=\theta$, where
$e^{-\psi}$ is integrable. Let
$\phi_0$ and $\phi_1$ be two bounded solutions of equation (6.3)
with $i\ddbar\phi_j\geq 0$. Then there is a holomorphic automorphism, $F$,
of $X$, homotopic to the identity, such that 
$$
F^*(\ddbar\phi_1)=\ddbar\phi_0
$$
and
$$
F^*(\theta)=\theta.
$$
\end{thm}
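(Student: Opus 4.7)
The plan is to connect $\phi_0$ and $\phi_1$ by a bounded geodesic $\phi_t$ on $F := -(K_X+S)$, form the curve $\tau_t := \phi_t + \psi$ of metrics on $-K_X$, show that $\F(t) = -\log\int e^{-\tau_t}$ is affine so that Theorem 6.1 applies, and then separate the resulting identity $F_t^*\ddbar\tau_t = \ddbar\tau_0$ into its two components.

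Since each $\phi_j$ is bounded with $i\ddbar\phi_j \geq 0$, the construction in section 2.2 supplies a bounded geodesic $\phi_t$ on $F$ joining them, with $i\ddbar_{t,X}\phi_t \geq 0$ and $(i\ddbar_{t,X}\phi_t)^{n+1} = 0$. Then $\tau_t$ satisfies all three hypotheses of Theorem 6.1: it depends only on $\Re t$ through $\phi_t$, $\psi$ is fixed with $e^{-\psi}$ integrable by the klt assumption, and $i\ddbar_{t,X}\tau_t = i\ddbar_{t,X}\phi_t + \theta \geq 0$.

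To establish affineness of $\F$, I will use the twisted Mabuchi-type functional
\[
\G_\psi(\phi) := -\log\int e^{-(\phi+\psi)} - \E(\phi,\chi),
\]
where $\chi$ is a reference metric on $F$. A first-variation computation identifies its critical points with solutions of (6.4), so $\phi_0$ and $\phi_1$ are both critical. By Theorem 1.1 applied to $\tau_t$, $\F$ is subharmonic, hence convex in $\Re t$, while $\E(\phi_t,\chi)$ is affine along the geodesic because $(i\ddbar_{t,X}\phi_t)^{n+1}=0$. Thus $\G_\psi$ is convex along the geodesic. Arguing as in section 5, the one-sided derivatives of $\G_\psi$ at $0$ and at $1$ exist and vanish by the critical point equation, and simple convergence theorems for the Monge--Amp\`ere operator are enough to handle the regularity. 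Since a convex function on an interval with vanishing one-sided derivatives at both endpoints must be constant, $\G_\psi$ is constant along the geodesic, and consequently $\F = \G_\psi + \E$ is affine.

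Theorem 6.1 then produces a holomorphic vector field $V$ with flow $F_t$ satisfying $F_t^*\ddbar\tau_t = \ddbar\tau_0$. The main remaining obstacle is to upgrade this to the separate identities $F_t^*\ddbar\phi_t = \ddbar\phi_0$ and $F_t^*\theta = \theta$. For this I will revisit the curvature computation inside the proof of Theorem 6.1: the vanishing of $\langle\Theta u,u\rangle$ forces the pointwise identity $i\ddbar\tau_t \wedge \hat u \wedge \overline{\hat u} = 0$, and since $i\ddbar\tau_t = i\ddbar\phi_t + \theta$ is a sum of two positive $(1,1)$-currents, each summand, when wedged with $\hat u \wedge \overline{\hat u}$, vanishes separately. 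Writing $v_t = -V_t \rfloor u$ for the nonvanishing trivialising section $u$, the $\theta$-piece reduces to the pointwise identity $\theta(V_t,\bar V_t) = 0$; combined with $\theta \geq 0$ and Cauchy--Schwarz, this forces $V_t \rfloor \theta = 0$, hence $L_{V_t}\theta = 0$ and $F_t^*\theta = \theta$. Subtracting from the total identity $F_t^*(\ddbar\phi_t + \theta) = \ddbar\phi_0 + \theta$ then gives $F_t^*\ddbar\phi_t = \ddbar\phi_0$, and $F := F_1$ is the desired holomorphic automorphism, homotopic to the identity via $\{F_s\}_{s\in[0,1]}$.
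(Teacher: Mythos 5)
Your reduction to Theorem 6.1 is exactly the paper's: the bounded geodesic of section 2.2 on $-(K_X+S)$, the verification of (i)--(iii) for $\tau_t=\phi_t+\psi$, and the affineness of $\F$ obtained from the convexity of $\G_\psi=\F-\E$ together with the vanishing of its one-sided derivatives at the two critical endpoints. Up to and including the application of Theorem 6.1 the proposal is sound.

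The final separation step, however, has a genuine gap. You re-enter the curvature computation and invoke a pointwise identity $i\ddbar\tau_t\wedge\hat u\wedge\overline{\hat u}=0$, split it into the two semipositive summands coming from $i\ddbar_{t,X}\phi_t$ and $\theta$, and conclude $\theta(V_t,\bar V_t)=0$. That splitting is fine when everything is smooth, but in the setting of this theorem $\theta$ is a genuine positive current (e.g.\ integration on a klt divisor) and $\phi_t$ is only bounded, and the proof of Theorem 6.1 never yields a pointwise identity for the limit objects. What it yields is $\int i\ddbar\tau^\nu\wedge\hat u^\nu\wedge\overline{\hat u^\nu}\,e^{-\tau^\nu}\to 0$ for smooth approximants $\tau^\nu$ of $\tau$, and these are produced as almost-positively curved metrics on $-K_X$ that are \emph{not} decomposed into separately semipositive pieces $\phi^\nu+\psi^\nu$; moreover $v^\nu_t\to v_t$ only weakly in $L^2$, which does not allow passage to the limit in a quadratic expression so as to isolate $\int\theta(V_t,\bar V_t)|u|^2e^{-\tau}=0$. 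Note that, given $H^{0,1}(X)=0$, the identity $V_t\rfloor\theta=0$ is essentially equivalent to the statement $F_t^*\theta=\theta$ you are trying to prove, so this step cannot be waved through. The paper sidesteps all of this with the equation itself: since $\text{Ric}(\omega^j)=\omega^j+\theta$ for $j=0,1$ and $F^*\text{Ric}(\omega^1)=\text{Ric}(F^*\omega^1)$, the conclusion $F^*(\omega^1+\theta)=\omega^0+\theta$ of Theorem 6.1 forces $F^*\theta=\theta$, and then $F^*\ddbar\phi_1=\ddbar\phi_0$ by subtraction. You should replace your last paragraph by this argument, or else supply the missing limiting argument for the quadratic term (which would amount to redoing the approximation scheme with a decomposition adapted to $\theta$, in the spirit of section 6.2).
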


\begin{proof} By Theorem 6.1 there is an $F$ such that
$$
F^*(\ddbar\phi_1+\ddbar\psi)=\ddbar\phi_0+\ddbar\psi
$$
so we just need to see that $F$ preserves $\theta=i\ddbar\psi$. But this
follows since $\omega^j:=i\ddbar\phi_j$ solves (6.1) and
$F^*(\text{Ric}(\omega^1))= \text{Ric}(F^*(\omega^1))$. Thus
$$
\omega^1 +\theta=\text{Ric}(\omega^1)
$$
implies
$$
\omega^0+F^*(\theta)=\text{Ric}(\omega^0)=\omega^0 +\theta,
$$
and we are done.

\end{proof}
\begin{remark}
 Note that in case $\theta$ is  strictly positive we even
 get absolute uniqueness. This follows from the proof of Theorem 6.1
 since both $\phi_t$ and $\phi_t+\psi$ must be geodesics, which forces
 $\phi_t$ to be linear in $t$ if $i\ddbar\psi>0$. Certainly the
 assumption on strict positivity can be considerably relaxed here, see
 the end of the next section for a comment on this.
\end{remark}
In the same way we get from Theorem 6.3 
\begin{thm} Assume that $-K_X=-(K_X+S) +S $ where $-(K_X+S)$ is
  semipositive and $S$ is the $\R$-line bundle corresponding to a klt
  divisor $\Delta\geq 0$ with simple normal crossings. Assume also that 
  $H^{0,1}(X)=0$. 
  Let
$\phi_0$ and $\phi_1$ be two bounded solutions of equation (6.2) with
$\theta=[\Delta]$ and 
with $i\ddbar\phi_j\geq 0$. Then there is a holomorphic automorphism, $F$,
of $X$, homotopic to the identity, such that 
$$
F^*(\ddbar\phi_1)=\ddbar\phi_0
$$
and
$$
F^*([\Delta])=[\Delta].
$$
\end{thm}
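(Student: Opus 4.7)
The plan is to repeat the Bando-Mabuchi argument of section 5 and the proof of Theorem 6.5 essentially verbatim, substituting Theorem 6.3 for Theorem 1.2.

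First, join $\phi_0$ and $\phi_1$ by a bounded geodesic $\phi_t$ of metrics on $-(K_X+S)$ with $i\ddbar\phi_t\geq 0$. This is possible by the construction recalled in section 2.2, since both endpoints are bounded with semipositive curvature. Setting $\tau_t:=\phi_t+\psi$, the curve $\tau_t$ on $-K_X$ satisfies the hypotheses of Theorem 6.3: $\phi_t$ is bounded and depends only on $\Re t$; the fixed $\psi$ is a metric on $S$ with $i\ddbar\psi=[\Delta]$ for a klt divisor with simple normal crossings; and $-(K_X+S)\geq 0$ by assumption.

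Next, show that $\F(t):=-\log\int_X e^{-\tau_t}$ is affine along the geodesic. Following section 5 and section 6.3, introduce
$$
\G_\psi(\phi):=-\log\int_X e^{-(\phi+\psi)}-\E(\phi,\chi)
$$
for a reference metric $\chi$ on $-(K_X+S)$. The endpoints $\phi_0,\phi_1$ are solutions of the twisted K\"ahler-Einstein equation and are therefore critical points of $\G_\psi$. Along the geodesic, $t\mapsto \E(\phi_t,\chi)$ is affine by the properties of the relative energy recalled in section 5, while $\F$ is convex by Theorem 1.1 applied to $\tau_t$ on $-K_X$. Hence $\G_\psi(\phi_t)$ is convex with critical points at $0$ and $1$, so it is constant and $\F$ must be affine. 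The justification that the one-sided derivatives of $\G_\psi(\phi_t)$ at the endpoints are given by the expected integral formulas is the same as in section 5, using the uniform Lipschitz bound on the geodesic together with dominated convergence against the integrable weight $e^{-\psi}$ and simple convergence for the Monge-Amp\`ere operator.

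Theorem 6.3 then produces a holomorphic vector field $V$ on $X$ whose time-one flow $F$ satisfies $F^*(\ddbar\tau_1)=\ddbar\tau_0$, that is, $F^*(\ddbar\phi_1+[\Delta])=\ddbar\phi_0+[\Delta]$. To split this into the two desired identities, I would invoke the argument from the proof of Theorem 6.5 verbatim: setting $\omega^j:=i\ddbar\phi_j$, the twisted K\"ahler-Einstein equations $\text{Ric}(\omega^j)=\omega^j+[\Delta]$ combined with the naturality $F^*\text{Ric}=\text{Ric}\circ F^*$ force $F^*([\Delta])=[\Delta]$, whence also $F^*(\ddbar\phi_1)=\ddbar\phi_0$. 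Since $F$ is the time-one flow of $V$, it is automatically homotopic to the identity.

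The main (and essentially only) obstacle is the criticality step, where the singularities of $e^{-\tau_t}$ along $\Delta$ must be controlled when differentiating $\F$ at the endpoints of a merely bounded geodesic. The klt assumption makes $e^{-\psi}$ integrable, and combined with the uniform Lipschitz bound from section 2.2 this allows the endpoint-derivative argument of section 5 to go through with no essential change.
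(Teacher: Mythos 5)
Your proposal is correct and follows essentially the same route as the paper, whose proof of this theorem is literally ``in the same way'' as Theorem 6.5: connect the critical points by a bounded geodesic, use convexity of $\F$ plus affineness of $\E$ and criticality at the endpoints to force $\F$ affine, apply Theorem 6.3, and split off $F^*([\Delta])=[\Delta]$ via the naturality of the Ricci curvature. Your extra remarks on the endpoint-derivative justification against the integrable weight $e^{-\psi}$ only make explicit what the paper leaves implicit.
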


 \section{ A concluding (wonkish) remark on complex
  gradients} 

The curvature formula in Theorem 3.1 is based on a particular choice
of the auxiliary $(n-1,0)$ form $v_t$ as the solution of an equation
$$
\dt v_t=\pi_\perp(\dof u_t).
$$

In the case when $\phi_t$ is smooth and $i\ddbar_X\phi_t>0$ one could
alternatively choose $\tilde v_t$ as 
$$
\tilde v_t=V_t\rfloor u,
$$
where $V_t$ is the complex gradient of $\dof$ defined by
$$
V_t\rfloor \ddbar_X\phi_t=\dbar\dof.
$$
This leads to a different formula for the curvature which is the one
used in \cite{2Berndtsson}:
\be
\langle\Theta^E u,u\rangle=\int_{X_t} c(\phi) |u|^2 e^{-\phi} +\langle
  (\Box +1)^{-1}\dbar \tilde v_t,\tilde v_t\rangle,
\ee
where $\Box$ is the $\dbar$-Laplacian for the metric
$i\ddbar_X\phi_t$. The relation between the two formulas is discussed
in \cite{3Berndtsson} in the more general setting of a nontrivial
fibration. At any
rate, the two choices $v_t$ and $\tilde v_t$ coincide in case the
curvature vanishes, as we have seen in section 3. 

Of course the definition of $\tilde v_t$ makes no sense in our more
general setting since we have no metric on $X$ to help us define a
complex gradient. Nevertheless, the methods of section 3 can perhaps
be seen as giving a way to define a 'complex gradient' in a nonregular
situation. We formulate the basic principle in the next proposition.
\begin{prop} Let $L$ be a holomorphic line bundle over the compact
  K\"ahler manifold $X$, and let $\phi$ be a smooth metric on $L$, not
  necessarily with positive 
  curvature. Assume $V$ is a holomorphic vector field on $X$ such that
$$
V\rfloor \ddbar\phi=0.
$$
Then $V=0$ provided that 
$$
H^{(0,1)}(X, K_X+L)=0
$$
and
$$
H^{0}(X, K_X+L)\neq 0.
$$
\end{prop}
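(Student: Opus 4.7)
Following the philosophy that runs through section 3, the idea is to use a nonzero holomorphic section $u \in H^0(X, K_X + L)$ as a substitute for strict positivity of $\ddbar\phi$. Set $\tilde v := V \rfloor u$, an $L$-valued $(n-1,0)$-form. If I can establish that $\tilde v \equiv 0$, then since $u$ vanishes only on a proper analytic subset and $V$ is continuous, this forces $V = 0$ everywhere.

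The first observations are that $\tilde v$ is $\dbar$-closed (because $V$ and $u$ are both holomorphic), and that the hypothesis $V \rfloor \ddbar\phi = 0$ translates, via the contraction identity $(V \rfloor \ddbar\phi) \wedge u = -\ddbar\phi \wedge (V \rfloor u)$ already used in section 3, into $\ddbar\phi \wedge \tilde v = 0$. Combining this with the commutator identity $\dbar\dt + \dt\dbar = \ddbar\phi\,\wedge$, valid for $\dt = \ds - \ds\phi\wedge$, one gets $\dbar(\dt \tilde v) = 0$, so that $\dt \tilde v$ is a holomorphic section of $K_X + L$.

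The crucial step is to show $\dt \tilde v = 0$ via Stokes' theorem. I would compute
$\|\dt \tilde v\|^2 = \int_X c_n\,\dt \tilde v \wedge \overline{\dt \tilde v}\, e^{-\phi}$
and check that, thanks to $\dbar \tilde v = 0$ and $\ds\overline{\dt \tilde v} = 0$ (which follows from the holomorphicity of $\dt \tilde v$ established above), the integrand equals the exact form $d\bigl(c_n\,\tilde v \wedge \overline{\dt \tilde v}\, e^{-\phi}\bigr)$: the $\dbar$-part vanishes for bidegree reasons, and the $\ds$-part regroups exactly into $\dt \tilde v \wedge \overline{\dt \tilde v}\, e^{-\phi}$ after collecting the $-\ds\phi\wedge\tilde v$ term. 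Compactness of $X$ then gives $\|\dt \tilde v\|^2 = 0$.

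To finish, let $\alpha := \tilde v \wedge \omega$, where $\omega$ is a K\"ahler form on $X$. Then $\dbar\alpha = 0$, and by the correspondence $\dbar^*_\phi(v \wedge \omega) = \pm \dt v$ recalled in section 3 we have $\dbar^*_\phi\alpha = \pm\dt \tilde v = 0$. Hence $\alpha$ is a $\dbar$-harmonic $L$-valued $(n,1)$-form, and the class it represents lies in $H^{n,1}(X,L) \cong H^{0,1}(X, K_X+L)$, which vanishes by hypothesis; so $\alpha = 0$. Since wedging with $\omega$ is injective on $\Omega^{n-1,0}$ (at any point, in normal coordinates it sends $dz_1\wedge\cdots\widehat{dz_i}\cdots\wedge dz_n$ essentially to $dz_1\wedge\cdots\wedge dz_n\wedge d\bar z_i$), we deduce $\tilde v = 0$ and therefore $V = 0$. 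I expect the only real bookkeeping hurdle to be the sign computation that makes the Stokes integrand exact, and the consistent application of the $\dt \leftrightarrow \dbar^*_\phi$ dictionary.
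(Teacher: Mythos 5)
Your proposal is correct and follows essentially the same route as the paper: contract $u$ with $V$ to form $\tilde v=V\rfloor u$, use $V\rfloor\ddbar\phi=0$ and the commutator identity to see that $\dbar^*_\phi(\tilde v\wedge\omega)=\pm\dt\tilde v$ is holomorphic, kill it by integration by parts, and then use $H^{n,1}(X,L)=0$ to conclude that the $\dbar$-closed, co-closed form $\tilde v\wedge\omega$ vanishes. The only cosmetic differences are that you phrase the last step via harmonic representatives (the paper writes $\alpha=\dbar g$ and pairs) and that you make explicit the injectivity of wedging with $\omega$ on $(n-1,0)$-forms, which the paper leaves implicit.
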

\begin{proof} We follow the arguments in section 3. Let $u$ be a
  global holomorphic section of $K_X+L$, and put
$$
v:=V\rfloor u.
$$
Then $v$ is a holomorphic $(n-1,0)$-form and
$$
\ddbar\phi\wedge v=-(V\rfloor\ddbar\phi)\wedge u=0.
$$
Hence
$$
\dbar\partial^\phi v=-\partial^\phi\dbar v=0.
$$
Put $\alpha=v\wedge \omega$ where $\omega$ is the K\"ahler form. Then
$\alpha $ is a smooth, $\dbar$-closed $(n,1)$-form solving
$$
\dbar\dbar^*_\phi\alpha=0.
$$
This means that $\dbar^*_\phi\alpha$ is a holomorphic, hence smooth
$(n,0)$-form. Integrating by parts we get
$$
|\dbar^*_\phi\alpha|^2=0
$$
Since we have assumed $H^{(n,1)}=0$, $\alpha=\dbar g$ for some
$g$. Then
$$
\|\alpha\|^2=\langle\alpha,\dbar g\rangle=0
$$
so $v$ and hence $V$ are 0.
\end{proof}

\bigskip

\noindent This means that holomorphic solutions of 
$$
V\rfloor \ddbar\phi=\dbar\chi
$$
are unique, if they exist. 

\bigskip

Let us finally compare this to our first uniqueness result for twisted
K\"ahler-Einstein equations, Theorem 6.5,  and the remark immediately after it
(Remark 4). There we noted that in case the twisting term $\theta$ is
strictly positive, the automorphism $F$ must be the identity, so that
we even get absolute uniqueness, and not just uniqueness up to a
holomorphic automorphism. A considerably more general statement
follows from Proposition 7.1: For absolute uniqueness it suffices to
assume that some multiple of the $\R$ bundle $S$ satisfies the
cohomological assumptions in Proposition 7.1, $H^0(X, K_X+mS)\neq 0$
and $H^1(X, K_X+mS)=0$. This is certainly the case (by Kodaira
vanishing) if $S>0$, even if $\theta$ itself is not assumed
positive. Of course it also holds in many other cases that are not
covered by Kodaira's theorem. 

In this connection, notice also that some kind of regularity of 
$\phi$ in Proposition 7.1 is necessary, since for completely general
metrics the meaning 
of the operators $\partial^\phi$ and $\dbar^*_\phi$ becomes
unclear. This is not just a technical problem. The vector field
$z\partial/\partial z$ vanishes at $z=0$ and $z=\infty$ on the Riemann
sphere. But, the divisor $\{0\}\cup\{\infty\}$ is certainly ample.

\def\listing#1#2#3{{\sc #1}:\ {\it #2}, \ #3.}


\begin{thebibliography}{9999}
\bibitem{Bando-Mabuchi}\listing{Bando, Shigetoshi; Mabuchi, Toshiki}{
    Uniqueness of Einstein Kähler metrics modulo connected group
    actions.}{  Algebraic geometry, Sendai, 1985,  11--40,
    Adv. Stud. Pure Math., 10, North-Holland, Amsterdam, 1987} 

 
\bibitem{Berman}\listing{Berman R}{Analytic torsion, vortices and positive Ricci curvature}{arXiv:1006.2988}

\bibitem{2Berman}\listing{Berman R}{A thermodynamical formalism for
    Monge-Ampere equations, Moser-Trudinger inequalities and
    Kahler-Einstein metrics}{1. arXiv:1011.3976   }

\bibitem{Berman-Demailly}\listing{Berman R and Demailly, J-P}{Regularity of plurisubharmonic upper envelopes in big cohomology classes}{arXiv:0905.1246}

\bibitem{Berman et al}\listing{R. J. Berman, S. Boucksom, V. Guedj, A. Zeriahi}
{A variational approach to complex Monge-Ampere equations }{arXiv:0907.4490 }

\bibitem{1Berndtsson}\listing{Berndtsson, B}{Curvature of vector bundles
    associated to holomorphic fibrations  }{Ann Math 169 2009, pp
    531-560, or arXiv:math/0511225 }




\bibitem{2Berndtsson}\listing{Berndtsson,B}{Positivity of direct image
  bundles and convexity on the space of Kähler metrics}{J Differential
geom 81.3, 2009, pp 457-482}


\bibitem{3Berndtsson}\listing{Berndtsson, B}{Strict and non strict
    positivity of direct image bundles  }{Math Z, to  appear and
    arXiv:1002.4797} 




\bibitem{Blocki-Kolodziej}\listing{Blocki Z, Kolodziej S}{
On regularization of plurisubharmonic functions on manifolds.}{ 
Proc. Amer. Math. Soc. 135 (2007), no. 7, 2089-2093}




\bibitem{Chen}\listing{Chen X X}{ The space of Kähler metrics}{
    J. Differential Geom.  56  (2000),  no. 2, 189--234} 

\bibitem{Demailly}\listing{Demailly, J-P}{ Regularization of closed
  positive currents and intersection theory.}{  J. Algebraic Geom.  1
  (1992),  no. 3, 361-409.}

\bibitem{1Donaldson}\listing{ Donaldson, S. K.}{Holomorphic discs and
    the Complex Monge-Ampere equation }{  Journal of Symplectic
    Geometry  1  (2002), 
  no. 2 pp 171-196} 

\bibitem{Donaldson}\listing{ Donaldson, S. K.}{ Symmetric spaces,
    Kähler geometry and Hamiltonian dynamics}{  Northern California
    Symplectic Geometry Seminar,  13--33}

\bibitem{2Donaldson}\listing{ Donaldson, S. K.}{Kahler metrics with
    cone singularities along a divisor}{ arXiv:1102.1196  }
   
\bibitem{Gardner}\listing{Gardner, R J}{ The Brunn-Minkowski
    Inequality}{BAMS, 39 (2002), pp 355-405} 

\bibitem{1Mabuchi}\listing{ Mabuchi T }{Some symplectic geometry on
    compact K\"ahler manifolds}{  Osaka J. Math.  24 
      (1987),  227--252}

\bibitem{2Mabuchi}\listing{ Mabuchi T}{ $K$-energy maps integrating
    Futaki invariants.}{  Tohoku Math. J. (2)  38  (1986),  no. 4,
    575--593 } 
    

\bibitem{Prekopa}\listing{A. Prekopa} {On logarithmic concave measures and functions}{ Acad. Sci. Math. (Szeged) 34 (1973), p. 335-343  }


\bibitem{Semmes}\listing{ Semmes, S}{ Complex Monge-Ampère and
    symplectic manifolds.}{  Amer. J. Math.  114  (1992),  no. 3,
    495--550}

\bibitem{Szekelyhidi}\listing{ Sz\'ekelyhidi, G}{ Greatest lower
    bounds on the Ricci curvature of Fano manifolds}{ arXiv: 0903.5504 }


\end{thebibliography}
\end{document}